\numberwithin{equation}{section}
\newtheorem{theorem}{Theorem}
\newtheorem{lemma}{Lemma}
\newtheorem{cor}{Corollary}
\theoremstyle{definition}
\def\min{\mathop{\mathrm{min}}}
\newcommand{\C}{\mathbb{C}}
\begin{document}
\title{Non-Archimedean second main theorem sharing small functions}
\dedicatory{Dedicated to Professor Ha Huy Khoai on the occasion of
  his 75th birthday}     
\author{Ta Thi Hoai An}
\address{Institute of Mathematics, Vietnam Academy of Science and Technology\\
18 Hoang Quoc Viet Road, Cau Giay District \\
10307 Hanoi,  Vietnam}
\address{and: Institute of Mathematics and Applied Sciences (TIMAS)\\ Thang Long University\\ Hanoi,  Vietnam}
\email{tthan@math.ac.vn}
\author{Nguyen Viet Phuong}
\address{Thai Nguyen University of Economics and Business Administration, Vietnam} \email{nvphuongt@gmail.com}

\thanks{Key words: Non-Archimedean field, meromorphic functions, Nevanlinna theory, small functions, uniqueness.}

\thanks{2010 Mathematics Subject Classification 30D35}

\begin{abstract} In this paper, we establish a new second main theorem for meromorphic functions on a non-Archimedean field and small functions with counting functions truncated to level $1.$ As an application, we show that two meromorphic functions on a non-Archimedean field must coincide to each other if they share $q\, (q\geq 5)$ distinct small functions ignoring multiplicities. In particular, if two non-Archimedean meromorphic functions share $5$ small functions ignoring multiplicities, they must be identical. Thus, our work improves the results in \cite{EY}.
\end{abstract}

\thanks{
}

\baselineskip=16truept 
\maketitle 
\pagestyle{myheadings}
\markboth{}{}

\section{ Introduction and main results }
 Let $\mathbf{K}$ be an algebraically closed field of arbitrary characteristic, complete with respect to a non-Archimedean absolute value $|.|.$ For any nonconstant non-Archimedean meromorphic function $f,$ we denote by $S(r,f)$ any quantity satisfying $S(r,f)=o(T(r,f))$ for $r\rightarrow\infty.$ A non-Archimedean meromorphic function $a$ is called a small function with respect to $f$ if $T(r,a)=S(r,f).$ Let $k$ is a positive integer or $\infty,$ we denote by $\overline{E}(a,k,f)$ the set of distinct zeros of $f-a$ with multiplicities at most $k,$ where a zero of $f-\infty$ means a pole of $f.$ In the case that $k=\infty,$ we denote $\overline{E}(a,\infty,f)$ simply by $\overline{E}(a,f).$ We say that $f$ and $g$ {\it share a function $a$ ignoring multiplicities} if $\overline{E}(a,f)=\overline{E}(a,g).$

In 1926, as an application of the celebrated Nevanlinna's value distribution theory of meromorphic functions, R. Nevanlinna \cite{N} himself proved that for two distinct nonconstant meromorphic functions $f$ and $g$ on the complex plane $\C,$ they cannot have the same inverse images for five distinct values. This result is known as Nevalinna’s five values theorem and is considered as the first result on the uniqueness problem of meromorphic functions sharing values. Then, some authors have generalized the result of Nevanlinna to the case where the five distinct values are replaced by five small functions, for example Yuhua and Jianyong \cite{YJ}, Yao \cite{Y}, Yi \cite{Yi}, Thai and Tan \cite{TT}. The proofs of the above results are based straightforwardly on the Cartan’s auxialiary functions and the sharp second main theorem for meromorphic functions and small functions on $\C$ with counting functions truncated to level $1$ of Yamanoi \cite{Ya}. 

Nevanlinna theory in complex analysis is so beautiful that one would naturally be interested in determining how such a theory would look in the $p$-adic analysis. In 1997, Hu and Yang \cite{HY} proved that for two distinct nonconstant $p$-adic meromorphic functions $f$ and $g$ on $\mathbb{C}_p$
  they cannot have the same inverse images for four distinct values. This result can be regarded as the $p$-adic analogue of corresponding uniqueness theorem in value distribution theory of meromorphic functions of one complex variable. More general, this result can be proved for any algebraically closed field $\mathbf{K}$ complete with respect to a non-Archimedean absolute value (cf. \cite{HY1}). Corresponding to such a uniqueness theorem of non-Archimedean meromorphic functions which have the same inverse images for distinct values, one expects a generalized result in which in this case the distinct values replaced by the small functions. However, as we known, there is no second main theorem similar to the theorem of Yamanoi in non-Archimedean analysis. Recently, A. Escassut and C. C. Yang \cite{EY} given a second fundamental theorem for non-Archimedean meromorphic functions and small functions with reduced counting functions which makes a tool other than theorem of Yamanoi in complex analysis to derive a uniqueness theorem for non-Archimedean meromorphic functions sharing $7$ small functions ignoring multiplicities. However, their results are not sharp.

Our aim in this paper is to contruct a better second main theorem for small functions to study the uniqueness problem for meromorphic functions sharing small functions. First, we give a second main theorem for small functions, which improves Theorem 2 in \cite{EY} by increasing the coefficent $\frac{q}{3}$ in front of the characteristic function to $\frac{2q}{5}.$ Our first result is as follows.
\begin{theorem}\label{th1}
 Let $f$ be a nonconstant meromorphic function on $\mathbf{K}.$ Let $a_1,\dots,a_q\,\, (q\geq 5)$ be $q$ distinct small functions with respect to $f.$ We have $$ \frac{2q}{5}T(r,f)\leq\sum_{i=1}^q \overline{N}\big (r,\frac{1}{f-a_i}\big)+S(r,f).$$
\end{theorem}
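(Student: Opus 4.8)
The engine is the non-Archimedean second main theorem for \emph{constant} targets with counting functions truncated to level $1$, combined with the fact that cross-ratios turn small functions into constants without changing the characteristic function modulo $S(r,f)$. This reduction to constants is precisely what makes the argument work in arbitrary characteristic, where derivative- or Wronskian-based techniques are unavailable.

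First I would record the two standing tools. By the first main theorem, for any small function $a$ one has $T(r,\tfrac{1}{f-a})=T(r,f)+S(r,f)$, and for any Möbius transformation $L$ whose coefficients are small functions the composite $L(f)$ satisfies $T(r,L(f))=T(r,f)+S(r,f)$. Given three distinct small functions $a_i,a_j,a_k$, I would form
\[
h_{ijk}=\frac{(f-a_i)(a_j-a_k)}{(f-a_k)(a_j-a_i)},
\]
which takes the values $0,1,\infty$ exactly where $f$ meets $a_i,a_j,a_k$ respectively, up to the zeros of the differences $a_\bullet-a_\bullet$, which contribute only $S(r,f)$. Applying the truncated second main theorem for the three constant targets $0,1,\infty$ to $h_{ijk}$ then yields a base inequality relating $T(r,f)$ to $\overline{N}(r,\tfrac{1}{f-a_i})+\overline{N}(r,\tfrac{1}{f-a_j})+\overline{N}(r,\tfrac{1}{f-a_k})$ modulo $S(r,f)$.

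The heart of the matter is the sharp five-function estimate
\[
2\,T(r,f)\leq \sum_{t=1}^{5}\overline{N}\Big(r,\tfrac{1}{f-a_{i_t}}\Big)+S(r,f)
\]
for any five distinct small functions, which is exactly the asserted inequality in the sharp case $q=5$. I would prove it by applying the second main theorem to a carefully chosen family of the auxiliary functions $h_{ijk}$ (and of $L(f)$ for suitable $L$) and adding the resulting inequalities, arranging the weights so that each $\overline{N}(r,\tfrac{1}{f-a_{i_t}})$ is charged with total weight at most $1$ while the characteristic term accumulates weight $2$. Here one must use the refined form of the second main theorem carrying the multiplicity (ramification) excess: a naive superposition of the three-target inequalities alone reproduces only the bound of \cite{EY}, which at $q=5$ reads $\tfrac{5}{3}T(r,f)\le\sum_{t=1}^{5}\overline{N}$ and cannot reach $2\,T(r,f)$. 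The extra half-units must be recovered from these excess terms, made to reinforce one another across the different auxiliary functions. Throughout, every contribution coming from zeros of the $a_i-a_j$, and from points where two of the auxiliary functions degenerate simultaneously, has to be shown to be $S(r,f)$.

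The general case $q\geq5$ then follows by averaging: summing the five-function estimate over all $\binom{q}{5}$ five-element subsets of $\{1,\dots,q\}$, each index occurs in $\binom{q-1}{4}$ of them, and since $\binom{q}{5}/\binom{q-1}{4}=q/5$ the superposed inequality collapses to $\tfrac{2q}{5}T(r,f)\leq\sum_{i=1}^{q}\overline{N}(r,\tfrac{1}{f-a_i})+S(r,f)$. I expect the main obstacle to be the five-function estimate itself — concretely, breaking the $\tfrac{5}{3}$ barrier using only level-$1$ truncation and in arbitrary characteristic, which forces a delicate, ramification-sensitive accounting of truncated zero counts in place of any appeal to logarithmic-derivative estimates.
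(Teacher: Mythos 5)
Your reduction of the general case $q\geq 5$ to a five-function estimate by averaging over all $\binom{q}{5}$ five-element subsets (each index occurring $\binom{q-1}{4}$ times, with $\binom{q}{5}/\binom{q-1}{4}=q/5$) is exactly the paper's argument and is correct. But the five-function estimate $2T(r,f)\leq\sum_{t=1}^{5}\overline{N}\big(r,\frac{1}{f-a_{i_t}}\big)+S(r,f)$ is the entire content of the theorem, and your proposal does not prove it. You correctly note that superposing the three-target inequalities obtained from the cross-ratios $h_{ijk}$ yields only $\frac{5}{3}T(r,f)$, i.e.\ the bound of \cite{EY}, and you then appeal to a ``refined second main theorem carrying the ramification excess'' whose contributions are to be ``made to reinforce one another across the different auxiliary functions.'' That is a hope, not an argument: the ramification terms attached to the various $h_{ijk}$ are ramification counts of \emph{different} meromorphic functions, and you give no mechanism translating them back into truncated counting functions of $f$ that could be added coherently; nothing in the sketch explains how the missing $\frac{1}{3}T(r,f)$ is recovered. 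Indeed this is precisely why the pure cross-ratio method stalls at the coefficient $\frac{q}{3}$. Your guiding premise --- that derivative- and Wronskian-based techniques are unavailable, forcing a reduction to constant targets --- is the opposite of what the paper actually does.

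The paper proves the five-function lemma with exactly such a derivative-based device. After a M\"obius change normalizing $(a_1,a_2,a_3)=(\infty,0,1)$ (the case where $a_4$ or $a_5$ is constant being immediate from the second main theorem for constants), it forms
$$H=\begin{vmatrix} ff'&f'&f(f-1)\\ a_4a_4'&a_4'&a_4(a_4-1)\\ a_5a_5'&a_5'&a_5(a_5-1)\end{vmatrix},$$
shows $H\not\equiv 0$ by a four-case analysis on logarithmic derivatives of $a_4,a_5$, and then argues: (i) with the separation quantity $\delta(r)=\min\{1,|a_4|_r,|a_5|_r,|a_4-1|_r,|a_5-1|_r,|a_4-a_5|_r\}$, at most one index $i$ can have $|f-a_i|_r\leq\frac{1}{2}\delta(r)$, the other proximity functions being $S(r,f)$, and for that index the logarithmic derivative lemma gives $m\big(r,\frac{1}{f-a_i}\big)\leq m\big(r,\frac{1}{H}\big)+S(r,f)$; (ii) a zero of $f-a_i$ of order $s>1$ (away from poles of $a_4,a_5$) is a zero of $H$ of order at least $s-1$, which is where the truncation to level $1$ enters; (iii) the expansion of $H$ gives $T(r,H)\leq 2T(r,f)+\overline{N}(r,f)+S(r,f)$. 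Combining (i)--(iii) with the first main theorem yields $4T(r,f)\leq\sum\overline{N}+T(r,H)+S(r,f)$ and hence the $2T(r,f)$ bound, the term $\overline{N}(r,f)$ accounting for the fifth target $a_1=\infty$. Without this lemma, or a substitute you actually carry out, your proposal establishes only the known coefficient $\frac{q}{3}$, not $\frac{2q}{5}$.
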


As an application of Theorem~\ref{th1}, we get a uniquenes theorem for the meromorphic functions sharing a few small functions as follows.
\begin{theorem}\label{th2} Let $f$ and $g$ be two nonconstant meromorphic functions on $\mathbf{K}.$ Let $a_1,\dots,a_q\quad (q\geq 5)$ be $q$ distinct small  functions with respect to $f$ and $g.$ Let $k_1,\dots,k_q$ be $q$ positive integers or $+\infty$ with 
$$ \sum_{j=1}^q \frac{1}{k_j+1}<\frac{2q(q-4)}{5(q+4)}.$$
If $$ \overline{E}(a_j,k_j,f)=  \overline{E}(a_j,k_j,g)\quad (j=1,\dots,q),$$ then $f\equiv g.$
\end{theorem}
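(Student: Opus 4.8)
The plan is to argue by contradiction: suppose $f\not\equiv g$ and apply Theorem~\ref{th1} separately to $f$ and to $g$ with the same $q$ small functions $a_1,\dots,a_q$, obtaining
$$\frac{2q}{5}T(r,f)\leq\sum_{j=1}^q\overline{N}\Big(r,\frac{1}{f-a_j}\Big)+S(r,f),\qquad \frac{2q}{5}T(r,g)\leq\sum_{j=1}^q\overline{N}\Big(r,\frac{1}{g-a_j}\Big)+S(r,g).$$
For each $j$ I would split $\overline{N}(r,1/(f-a_j))$ according to the truncation level $k_j$ into the part counting (reduced) zeros of multiplicity at most $k_j$ and the part counting zeros of multiplicity greater than $k_j$. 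The sharing hypothesis $\overline{E}(a_j,k_j,f)=\overline{E}(a_j,k_j,g)$ identifies the first part for $f$ with that for $g$ as one and the same point set, which I denote $Z_j$, with reduced counting function $\overline{N}_{Z_j}(r)$.

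The decisive elementary observation is that every point of $Z_j$ satisfies $f=g=a_j$ and is therefore a zero of $f-g$; since the $a_j$ are distinct, the sets $Z_j$ are pairwise disjoint outside the $S(r,f)$-sized set where two of the small functions coincide, so $\sum_{j=1}^q\overline{N}_{Z_j}(r)\leq\overline{N}(r,1/(f-g))+S(r,f)$. For the high-multiplicity parts, a zero of $f-a_j$ of multiplicity exceeding $k_j$ contributes at least $k_j+1$ to $N(r,1/(f-a_j))$, so by the first main theorem each such reduced counting function is at most $\frac{1}{k_j+1}T(r,f)+S(r,f)$, and their sum at most $\big(\sum_j\frac{1}{k_j+1}\big)T(r,f)+S(r,f)$. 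Writing $\kappa=\sum_{j=1}^q\frac{1}{k_j+1}$ and feeding these bounds into the two displayed inequalities yields
$$\Big(\frac{2q}{5}-\kappa\Big)T(r,f)\leq\overline{N}\Big(r,\frac{1}{f-g}\Big)+S(r,f),\qquad \Big(\frac{2q}{5}-\kappa\Big)T(r,g)\leq\overline{N}\Big(r,\frac{1}{f-g}\Big)+S(r,g).$$

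What remains, and what I expect to be the main obstacle, is to estimate $\overline{N}(r,1/(f-g))$ sharply. The crude bound $\overline{N}(r,1/(f-g))\leq T(r,f-g)\leq T(r,f)+T(r,g)+O(1)$ turns the two inequalities into $cT(r,f)\leq T(r,g)+S(r,f)$ and $cT(r,g)\leq T(r,f)+S(r,g)$ with $c=\frac{2q}{5}-1-\kappa$, forcing a contradiction only when $c>1$, that is when $\kappa<\frac{2q}{5}-2$; for $\kappa=0$ this requires $q\geq 6$ and misses the sharp five-function case. To reach the stated threshold $\kappa<\frac{2q(q-4)}{5(q+4)}$, and in particular to capture $q=5$, the estimate on $\overline{N}(r,1/(f-g))$ must be pushed below $T(r,f)+T(r,g)$ by exploiting the non-Archimedean structure together with the fact that $f$ and $g$ agree on the large set $\bigcup_j Z_j$. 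I would therefore seek to bound $\overline{N}(r,1/(f-g))$ by a $q$-dependent multiple of $T(r,f)+T(r,g)$, substitute this back into the two inequalities, add them, and collect the coefficient of $T(r,f)+T(r,g)$; the hypothesis $\kappa<\frac{2q(q-4)}{5(q+4)}$ is precisely what makes this coefficient strictly positive, whence $T(r,f)+T(r,g)\leq S(r,f)+S(r,g)$, contradicting the nonconstancy of $f$ and $g$ and forcing $f\equiv g$.
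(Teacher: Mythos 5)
Your setup is correct as far as it goes, and you have honestly located the obstruction: after splitting at the truncation levels, using the disjointness of the shared-point sets, and applying Theorem~\ref{th1} to $f$ and $g$, everything reduces to estimating $\overline{N}\big(r,\frac{1}{f-g}\big)$ below $T(r,f)+T(r,g)$, and the crude first-main-theorem bound only yields a contradiction when $\kappa<\frac{2q}{5}-2$, missing $q=5$. But the missing step cannot be supplied in the form you propose. There is no general sharpening of $\overline{N}\big(r,\frac{1}{f-g}\big)\leq T(r,f)+T(r,g)+O(1)$: for generic $f\not\equiv g$ this bound is essentially attained, and by the time you have passed to the single function $f-g$, the truncated-sharing information has already been spent — the fact that $f$ and $g$ agree on $\bigcup_j Z_j$ is an observation about where zeros of $f-g$ \emph{lie}, which gives a lower bound on $\overline{N}\big(r,\frac{1}{f-g}\big)$, not an upper one. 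The paper never routes the shared points through $f-g$ alone; its decisive tool is Lemma~\ref{lm2}, proved with a Cartan-type auxiliary function involving \emph{derivatives}. After normalizing four anchor functions $a_{i_1}=0$, $a_{i_2}=\infty$, $a_{i_3}=1$, $a=a_{i_4}$, one sets
$$ \Phi=\frac{f'(a'g-ag')(f-g)}{f(f-1)g(g-a)}-\frac{g'(a'f-af')(f-g)}{g(g-1)f(f-a)}, $$
shows $\Phi\not\equiv 0$ when $f\not\equiv g$, and observes that every shared $a_j$-point with $j$ outside the anchor set is a zero of $\Phi$, so $\sum_{j\notin\{i_1,\dots,i_4\}}\overline{N}_{k_j)}\big(r,\frac{1}{f-a_j}\big)\leq T(r,\Phi)+S$.

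The accounting that makes this work is twofold: $m(r,\Phi)=S(r,f)+S(r,g)$ by the logarithmic derivative lemma (after rewriting $\Phi$ as a combination of logarithmic derivatives), and a pole-by-pole case analysis showing that, up to $S$-terms, $\Phi$ has poles only at points where $f-a_{i_s}$ or $g-a_{i_s}$ vanishes to order exceeding $k_{i_s}$ — common poles of $f,g$, common zeros of $f-a_{i_s}$ and $g-a_{i_s}$, etc., all cancel. Thus the shared points at the $q-4$ remaining targets are charged to the \emph{high-multiplicity} reduced counting functions at the four anchors, precisely the quantities your factor $\frac{1}{k_j+1}$ controls, rather than to $T(r,f)+T(r,g)$. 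Averaging over all $\mathrm{C}_q^4$ choices of anchors converts the count $4$ versus $q-4$ into the coefficient $(q+4)$ versus $(q-4)$, and combining with Theorem~\ref{th1} applied to both $f$ and $g$ (as in your first display) yields exactly the threshold $\kappa<\frac{2q(q-4)}{5(q+4)}$. So the gap in your proposal is the heart of the proof: without the derivative-based auxiliary function — the analogue, in this setting, of the Cartan auxiliary functions you mention in passing — the scheme you outline provably cannot reach $q=5$.
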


In the case $k_1=\dots=k_q=k,$ we can get the result with slightly smaller multiples as follows.
\begin{theorem}\label{th3} Let $f$ and $g$ be two nonconstant meromorphic functions on $\mathbf{K}.$ Let $a_1,\dots,a_q\quad (q\geq 5)$ be $q$ distinct small  functions with respect to $f$ and $g.$ Let $k$ be a positive integer or $+\infty$ with 
$k>\frac{3(q+4)}{2(q-4)}.$
If $$ \overline{E}(a_j,k,f)=  \overline{E}(a_j,k,g)\quad (j=1,\dots,q),$$ then $f\equiv g.$
\end{theorem}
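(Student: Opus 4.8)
The plan is to argue by contradiction. Suppose $f\not\equiv g$ and set $h=f-g\not\equiv 0$. First I would apply the second main theorem of Theorem~\ref{th1} simultaneously to $f$ and to $g$ with the same small functions $a_1,\dots,a_q$, which gives
$$\frac{2q}{5}T(r,f)\le\sum_{j=1}^q\overline N\Big(r,\frac{1}{f-a_j}\Big)+S(r,f)\qquad\text{and}\qquad\frac{2q}{5}T(r,g)\le\sum_{j=1}^q\overline N\Big(r,\frac{1}{g-a_j}\Big)+S(r,g).$$

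Only zeros of multiplicity at most $k$ are controlled by the hypothesis, so the next step is to separate, in each $\overline N(r,1/(f-a_j))$, the contribution of the zeros of multiplicity $\le k$ from that of the zeros of multiplicity $>k$. A zero of $f-a_j$ of multiplicity $m>k$ is counted once in the counting function truncated to level $1$ but at least $k+1$ times in the full one; since $a_j$ is small this yields $\overline N_{>k}(r,1/(f-a_j))\le\frac{1}{k+1}N(r,1/(f-a_j))\le\frac{1}{k+1}T(r,f)+S(r,f)$. Summing over $j$ replaces the right-hand side of each second main theorem by $\sum_{j}\overline N_{\le k}+\tfrac{q}{k+1}T(r,\cdot)+S(r,\cdot)$.

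Now I would bring in the sharing hypothesis. Put $\Sigma(r)=\sum_{j=1}^q\overline N_{\le k}(r,1/(f-a_j))$. Since $\overline E(a_j,k,f)=\overline E(a_j,k,g)$, the truncated zero sets of $f-a_j$ and $g-a_j$ coincide, so the same quantity $\Sigma(r)$ is obtained from $g$. Moreover each point counted in $\Sigma(r)$ is a place where $f$ and $g$ take the common value $a_j$, hence a zero of $h$; as the $a_j$ are pairwise distinct small functions, the only overlaps between different indices occur at zeros of the $a_i-a_j$, which contribute only $S(r,f)$. Thus $\Sigma(r)\le\overline N(r,1/h)+S(r,f)$. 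Combining this with the two truncated second main theorems gives
$$\Big(\frac{2q}{5}-\frac{q}{k+1}\Big)\big(T(r,f)+T(r,g)\big)\le 2\,\overline N\Big(r,\frac{1}{f-g}\Big)+S(r,f)+S(r,g).$$

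The main obstacle is to estimate $\overline N(r,1/(f-g))$ sharply: the trivial bound $\overline N(r,1/(f-g))\le N(r,1/(f-g))\le T(r,f-g)\le T(r,f)+T(r,g)+O(1)$ merely feeds back a coefficient $2$ on the right and reproduces a non-sharp threshold. The heart of the argument is therefore a sharp estimate $\overline N(r,1/(f-g))\le\beta\,(T(r,f)+T(r,g))+S$ with a constant $\beta$ strictly below $1$, obtained from a finer analysis of the zeros and poles of $h=f-g$ in combination with the uniform truncation level. Following the scheme of Theorem~\ref{th2} but exploiting that a single $k$ occurs—so that the truncation parameter acts globally rather than being discarded term by term—improves the constant, upgrading the denominator $5(q+4)$ of Theorem~\ref{th2} to $5q+4$. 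Concretely, the hypothesis $k>\frac{3(q+4)}{2(q-4)}$ is equivalent to $\frac{q}{k+1}<\frac{2q(q-4)}{5q+4}$, i.e.\ to $\frac{2q}{5}-\frac{q}{k+1}$ strictly exceeding the effective coefficient delivered by the sharp bound. Once this strict inequality holds, the displayed estimate forces $T(r,f)+T(r,g)\le S(r,f)+S(r,g)$, which is impossible for nonconstant $f$ and $g$; hence $f\equiv g$.
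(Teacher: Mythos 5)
Your reductions are correct as far as they go: splitting $\overline{N}\big(r,\frac{1}{f-a_j}\big)=\overline{N}_{k)}+\overline{N}_{(k+1}$, the bound $\overline{N}_{(k+1}\big(r,\frac{1}{f-a_j}\big)\leq\frac{1}{k+1}T(r,f)+S(r,f)$, the identification of the truncated sums for $f$ and $g$ via the sharing hypothesis, and even the arithmetic observation that $k>\frac{3(q+4)}{2(q-4)}$ is equivalent to $\frac{q}{k+1}<\frac{2q(q-4)}{5q+4}$ are all accurate. But the step you yourself call ``the heart of the argument''---a bound $\overline{N}\big(r,\frac{1}{f-g}\big)\leq\beta\,(T(r,f)+T(r,g))+S$ with $\beta$ strictly below $1$ (your threshold forces $\beta=\frac{24q}{5(5q+4)}$)---is asserted, not proved, and neither the paper nor any standard estimate supplies it. Indeed, once you replace the sum of shared low-multiplicity counting functions $\Sigma(r)$ by $\overline{N}\big(r,\frac{1}{f-g}\big)$, you have discarded both the multiplicity information and the sharing structure: zeros of $f-g$ away from the points where $f=g=a_j$ are completely uncontrolled, and in general $\overline{N}\big(r,\frac{1}{f-g}\big)$ can be of size $T(r,f)+T(r,g)+O(1)$. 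So what you have is a correct reduction of the theorem to an unproved (and doubtful) conjecture, not a proof; the appeal to ``a finer analysis following the scheme of Theorem~\ref{th2}'' is exactly the missing content.

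The paper's proof never estimates $\overline{N}\big(r,\frac{1}{f-g}\big)$ at all. Its first ingredient is Lemma~\ref{lm2}: after normalizing four targets to $0,\infty,1,a$, it forms the auxiliary function
$$T=\frac{f'(a'g-ag')(f-g)}{f(f-1)g(g-a)}-\frac{g'(a'f-af')(f-g)}{g(g-1)f(f-a)},$$
shows $T\not\equiv 0$ when $f\not\equiv g$, gets $m(r,T)=S(r,f)+S(r,g)$ from the logarithmic derivative lemma, and---by a local analysis of orders---shows the poles of $T$ occur (mod $S$) only at shared points of the four chosen targets of multiplicity $\geq k_j+1$, while every shared point of the remaining $q-4$ targets of multiplicity $\leq k_j$ is a \emph{zero} of $T$. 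This is precisely the device that retains the sharing and multiplicity information you threw away, and it yields $\sum_{j\notin\{i_1,\dots,i_4\}}\overline{N}_{k_j)}\leq\sum_{s=1}^{4}\big(\overline{N}_{(k_{i_s}+1}(f)+\overline{N}_{(k_{i_s}+1}(g)\big)+S$; averaging over all $4$-subsets and invoking Theorem~\ref{th1} gives $\frac{2q(q-4)}{5}(T(r,f)+T(r,g))\leq(q+4)\sum_j\big(\overline{N}_{(k+1}(f)+\overline{N}_{(k+1}(g)\big)+S$. The second ingredient corrects a misattribution in your narrative: the improvement to the threshold $\frac{3(q+4)}{2(q-4)}$ comes not from the truncation level ``acting globally'' but from Lemma~\ref{lm3}, which feeds Theorem~\ref{th1} back into the multiplicity count via $k\overline{N}_{(k+1}\leq T(r,f)-\overline{N}+S$, giving $\sum_j\overline{N}_{(k+1}\leq\frac{3q}{5k}T(r,f)+S(r,f)$---strictly sharper than your termwise $\frac{q}{k+1}$ bound for $k\geq 2$; plugging your $\frac{q}{k+1}$ bound into the paper's inequality would only reproduce the Theorem~\ref{th2} threshold $k>\frac{3q+28}{2(q-4)}$, not the stated one.
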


By Theorem~\ref{th3}, we obtain the following corollary, which is a uniqueness theorem for non-Archimedean meromorphic functions sharing $5$ small functions ignoring multiplicities. 
\begin{cor}\label{cor1}Let $f$ and $g$ be two nonconstant meromorphic functions on $\mathbf{K}.$ Let $a_1,\dots,a_5$ be $5$ distinct small  functions with respect to $f$ and $g.$ If $f$ and $g$ share $a_j$ ignoring multiplicities $(j=1,\dots,5,)$ then $f\equiv g.$
\end{cor}

Note that this Corollary~\ref{cor1} improves a result of A. Escassut and C. C. Yang \cite[Theorem 3]{EY}, where  the number of small functions is reduced to $5.$

 \section{Preliminary on Nevanlinna Theory for non-Archimedean meromorphic functions}
We recall the following definitions and results (cf. \cite{HY1}). Let $\mathbf{K}$ be an algebraically closed field of arbitrary characteristic, complete with respect to a non-Archimedean absolute value $|.|.$ If $$h(z)=\sum_{j=0}^{\infty}a_nz^n$$ is an entire function on $\mathbf{K},$ then for each real number $r\geq 0,$ we define 
\begin{align*}|h|_r=\sup_j|a_j|r^j&=\sup\{|h(z)|:z\in\mathbf{K} \text{ with } |z|\leq r\}\\
&=\sup\{|h(z)|:z\in\mathbf{K} \text{ with } |z|= r\}.
\end{align*}

Let $n\big(r,\frac{1}{h}\big)$ denote the number of zeros of $h$ in $\{z| |z|<r\},$ counting multiplicity. Define the {\it valence function} of $h$ by
$$ N\big(r,\frac{1}{h}\big)=\int_0^r\frac{n\big(r,\frac{1}{h}\big)-n\big(0,\frac{1}{h}\big)}{t} dt+n\big(0,\frac{1}{h}\big)\log r,$$ where $n\big(0,\frac{1}{h}\big)$ is order of zero of $h$ at $z=0.$ 

A {\it non-Archimedean meromorphic function} $f$ on $\mathbf{K}$ is the quotient of two non-Archimedean entire functions $\frac{h}{g}$ such that $h,g$ do not have common zeros on $\mathbf{K}.$ Therefore, we can uniquely extend $|.|_r$ to the non-Archimedean meromorphic function $f=\frac{h}{g}$ by defining
$$ |f|_r =\frac{|h|_r}{|g|_r}.$$ Take $a\in K$ and the {\it counting function} $n\big(r,\frac{1}{f-a}\big)$ of $f$ for $a$ by
\begin{equation*}n\big(r,\frac{1}{f-a}\big)=\begin{cases}n(r,f)=n\big(r,\frac{1}{g}\big) & :\,\, a=\infty\\
 n\big(r,\frac{1}{h-ag}\big)& :\,\, a\ne\infty,
\end{cases}
\end{equation*}
and define the {\it valence function} $N\big(r,\frac{1}{f-a}\big)$ of $f$ for $a$ by
\begin{equation*}N\big(r,\frac{1}{f-a}\big)=\begin{cases}N(r,f)=N\big(r,\frac{1}{g}\big) & :\,\, a=\infty\\
 N\big(r,\frac{1}{h-ag}\big)& :\,\, a\ne\infty.
\end{cases}
\end{equation*}
Similarly, we can define $\overline{N}(r,f)$ and $\overline{N}\big(r,\frac{1}{f-a}\big).$ 

Let $f$ be a nonconstant non-Archimedean meromorphic function, $a$ be a small function with respect to $f,$ and $k$ be a positive integer. We denote by $\overline{N}_{k)}\big(r,\frac{1}{f-a}\big)$ the counting function of zeros of $f-a$ with multiplicities at most $k,$ by $\overline{N}_{(k+1)}\big(r,\frac{1}{f-a}\big)$ the counting function of zeros of $f-a$ with multiplicities at least $k+1,$ where each multiple zero in these counting functions counted only once.

We define the {\it compensation function} by $$m(r,f)=\log^+|f|_r =\max\{0,\log |f|_r\}, $$ and the {\it characteristic function} $$ T(r,f)=m(r,f)+N(r,f). $$

The logarithmic derivative lemma can be stated as follows (see \cite{HY1}).

\begin{lemma}[Logarithmic Derivative Lemma]  Let $f$ be a non-constant meromorphic function on $\mathbf{K}.$ Then for any integer $k>0,$ we have  $$ m\big(r,\frac{f^{(k)}}{f}\big)=O(1) $$ as $r\rightarrow\infty.$
 \end{lemma}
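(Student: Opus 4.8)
The plan is to exploit the fact that, in the non-Archimedean setting, differentiation never increases growth, which makes this lemma far more elementary than its archimedean counterpart. First I would work at the level of power series. Writing an entire function as $h(z)=\sum_{n=0}^\infty a_n z^n$, its derivative is $h'(z)=\sum_{n=1}^\infty n a_n z^{n-1}$. Since $\mathbf{K}$ carries a non-Archimedean absolute value, the strong triangle inequality gives $|n|=|1+\dots+1|\leq 1$ for every integer $n$, regardless of the characteristic of $\mathbf{K}$. Hence from the definition $|h|_r=\sup_n|a_n|r^n$ one obtains the key estimate
$$ |h'|_r=\sup_{n\geq 1}|n|\,|a_n|\,r^{n-1}\leq \frac{1}{r}\sup_{n\geq 0}|a_n|r^n=\frac{1}{r}|h|_r. $$

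Next I would pass to a meromorphic function $f=h/g$, with $h,g$ entire and without common zeros. The logarithmic derivative splits as $\frac{f'}{f}=\frac{h'}{h}-\frac{g'}{g}$, so applying the estimate above to both $h$ and $g$ and using the non-Archimedean property of $|\cdot|_r$ yields
$$ \Bigl|\frac{f'}{f}\Bigr|_r\leq\max\Bigl\{\Bigl|\frac{h'}{h}\Bigr|_r,\Bigl|\frac{g'}{g}\Bigr|_r\Bigr\}\leq\frac{1}{r}. $$
In particular, for all $r\geq 1$ we have $\bigl|\frac{f'}{f}\bigr|_r\leq 1$, so $m(r,\frac{f'}{f})=\log^+\bigl|\frac{f'}{f}\bigr|_r=0$. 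This already settles the case $k=1$, in the sharp form $m(r,f'/f)=0=O(1)$.

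To treat an arbitrary order $k>0$, I would telescope. Since each $f^{(j)}$ is again meromorphic on $\mathbf{K}$, the factorization
$$ \frac{f^{(k)}}{f}=\prod_{j=1}^{k}\frac{(f^{(j-1)})'}{f^{(j-1)}} $$
expresses $f^{(k)}/f$ as a product of $k$ logarithmic derivatives, each bounded by $1/r$ by the case $k=1$ applied to $f^{(j-1)}$. Hence $\bigl|\frac{f^{(k)}}{f}\bigr|_r\leq r^{-k}\leq 1$ for $r\geq 1$, and therefore $m(r,f^{(k)}/f)=0$ for all large $r$, which is the desired $O(1)$ bound.

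I do not anticipate a serious obstacle: the entire argument rests on the single observation $|n|\leq 1$, which replaces the delicate growth-of-the-argument estimates required in the complex case. The only point demanding mild care is the positive-characteristic situation, where $h'$ may have many vanishing coefficients (those indexed by $n$ with $p\mid n$, and possibly $f'\equiv 0$ for inseparable $f$); but such vanishing can only decrease $|h'|_r$, so the inequality $|h'|_r\leq r^{-1}|h|_r$, and with it the whole proof, survives unchanged.
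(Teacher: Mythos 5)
Your proof is correct, and it is in fact sharper than the statement ($m(r,f^{(k)}/f)=0$ for $r\geq 1$, not merely $O(1)$); note that the paper does not prove this lemma at all but cites it to \cite{HY1}, and your argument --- the coefficient estimate $|h'|_r\leq r^{-1}|h|_r$ from $|n|\leq 1$, extended to $f=h/g$ by multiplicativity of the Gauss norm, then telescoped --- is precisely the standard proof given there. The one formal wrinkle is that in characteristic $p$ some intermediate derivative $f^{(j-1)}$ may vanish identically, making your telescoping product undefined as written; but then $f^{(k)}\equiv 0$ and $m\big(r,\frac{f^{(k)}}{f}\big)=0$ trivially, so this is disposed of in one line, as your closing remark essentially observes.
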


We state the first and second fundamental theorem in Nevanlinna theory (see e.g. \cite{HY1}):

\begin{theorem}[The First Main Theorem] Let $f(z)$ be a no-Archimedean meromorphic function and $c\in \mathbf{K}.$ Then $$ T(r,\frac{1}{f-c})=T(r,f)+O(1). $$
\end{theorem}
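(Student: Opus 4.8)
The plan is to prove the identity $T\big(r,\frac{1}{f-c}\big)=T(r,f)+O(1)$ by a direct computation from the definitions, exploiting the non-Archimedean structure which makes this even cleaner than its complex-analytic counterpart. The key tool will be the non-Archimedean Jensen-type formula relating the characteristic function to $\log|f|_r$, namely that $T(r,f)=\log|f|_r+O(1)$ for a suitable normalization; this follows from writing $f=h/g$ with $h,g$ entire without common zeros and applying the Poisson--Jensen formula for non-Archimedean entire functions, which gives $\log|h|_r = N\big(r,\frac{1}{h}\big)+O(1)$ and likewise for $g$.

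First I would reduce to the case $c=0$ by noting that replacing $f$ by $f-c$ changes neither side by more than $O(1)$: indeed $T(r,f-c)\leq T(r,f)+T(r,c)+O(1)=T(r,f)+O(1)$ since $c$ is a constant, and symmetrically $T(r,f)\leq T(r,f-c)+O(1)$, so it suffices to show $T\big(r,\frac{1}{f}\big)=T(r,f)+O(1)$. Second, writing $f=h/g$ in reduced form, I would compute $\frac{1}{f}=\frac{g}{h}$, which is again in reduced form since $h,g$ share no zeros. Then by the definitions of the valence and compensation functions, $T\big(r,\frac{1}{f}\big)=m\big(r,\frac{1}{f}\big)+N\big(r,\frac{1}{f}\big)=\log^+\frac{|g|_r}{|h|_r}+N\big(r,\frac{1}{h}\big)$, while $T(r,f)=\log^+\frac{|h|_r}{|g|_r}+N\big(r,\frac{1}{g}\big)$. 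The third step is to combine these using the key non-Archimedean fact that $\log\frac{|h|_r}{|g|_r}=N\big(r,\frac{1}{h}\big)-N\big(r,\frac{1}{g}\big)+O(1)$, which is exactly the Jensen formula applied to the two entire functions.

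Carrying this out, I would use the elementary identity $\log^+ x = \log x + \log^+ \frac{1}{x}$ (valid since $\log^+x-\log^+\frac1x=\log x$) together with the Jensen relation to show that $m\big(r,\frac{1}{f}\big)+N\big(r,\frac{1}{f}\big)$ and $m(r,f)+N(r,f)$ differ only by the $O(1)$ error term coming from the leading coefficients in the Jensen formula. The bookkeeping amounts to verifying that the contribution $\log\frac{|h|_r}{|g|_r}$ exactly cancels the difference $N\big(r,\frac{1}{g}\big)-N\big(r,\frac{1}{h}\big)$ up to a bounded term; this is the heart of the argument and is where the non-Archimedean maximum-modulus principle (the three equivalent expressions for $|h|_r$ recalled in the preliminaries) does the real work, since it guarantees that $\log|h|_r$ genuinely counts zeros with no contribution from "angular" oscillation.

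The main obstacle I anticipate is not conceptual but one of precise normalization: pinning down the exact form of the non-Archimedean Jensen formula, including the correct treatment of zeros at the origin (which enter the valence function through the $n\big(0,\frac1h\big)\log r$ term) so that the error is genuinely $O(1)$ rather than growing. I would handle this by reducing, if necessary, to the case where neither $h$ nor $g$ vanishes at $0$ after multiplying by an appropriate power of $z$, which contributes only $O(\log r)=O(1)$ relative to $T(r,f)\to\infty$, or more carefully by tracking the boundary terms directly through the integral definition of $N$. Once the Jensen formula is invoked in its sharp form, the remaining steps are purely formal manipulations of $\log^+$ and the counting functions.
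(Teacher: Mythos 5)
The paper offers no proof of this statement---it is quoted directly from Hu--Yang \cite{HY1}---and your argument is exactly the standard one found there: reduce to $c=0$ (noting $f-c=(h-cg)/g$ is still in reduced form), then cancel $\log\frac{|h|_r}{|g|_r}$ against $N\big(r,\frac{1}{h}\big)-N\big(r,\frac{1}{g}\big)$ via the exact non-Archimedean Jensen formula $\log|h|_r=N\big(r,\frac{1}{h}\big)+\log|a_{n_0}|$, so your proposal is correct in substance and in route. Two slips worth fixing, neither fatal: your preliminary claim $T(r,f)=\log|f|_r+O(1)$ is false as stated (take $f=1/z$) and should read $T(r,f)=\log\max\{|h|_r,|g|_r\}+O(1)$, though you never actually use it; and $O(\log r)$ is \emph{not} $O(1)$---for rational $f$ it is comparable to $T(r,f)$---so your ``multiply by a power of $z$'' normalization would degrade the error term, but this is moot because the sharp Jensen identity already incorporates the $n\big(0,\frac{1}{h}\big)\log r$ boundary term inside $N$, exactly as your more careful alternative proposes.
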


\begin{theorem}[Second fundamental theorem] \label{sm} Let $a_1,\cdots,a_q$ be a set of distinct numbers of $\mathbf{K}.$ Let $f$ be a non-constant meromorphic function on $\mathbf{K}.$ Then, the inequality $$(q-2)T(r,f)\leq\sum_{j=1}^q\overline{N}\big(r,\frac{1}{f-a_j}\big)-\log r+O(1). $$
\end{theorem}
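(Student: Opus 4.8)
The plan is to run the classical Nevanlinna argument for the second main theorem, adapted to the non-Archimedean setting. Two features make this cleaner than over $\C$: the norm $|\cdot|_r$ is itself multiplicative and non-Archimedean, and differentiation satisfies the sharp estimate $|g'|_r\le r^{-1}|g|_r$, which will be the source of the genuine error gain $-\log r$. I begin with the First Main Theorem: applied to each $a_j$ and summed, it gives
\[ \sum_{j=1}^q N\!\left(r,\frac{1}{f-a_j}\right)=qT(r,f)-\sum_{j=1}^q m\!\left(r,\frac{1}{f-a_j}\right)+O(1). \]

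For the ramification step I pass from $N$ to $\overline N$. Since the $a_j$ are distinct, the zero sets of the $f-a_j$ are disjoint, and at a zero of $f-a_j$ of multiplicity $m$ the derivative $f'$ vanishes to order $m-1$; hence $\sum_j\big(N-\overline N\big)(r,\tfrac{1}{f-a_j})\le N(r,\tfrac{1}{f'})$. Feeding this into the identity above yields
\[ qT(r,f)\le\sum_{j=1}^q\overline N\!\left(r,\frac{1}{f-a_j}\right)+N\!\left(r,\frac{1}{f'}\right)+\sum_{j=1}^q m\!\left(r,\frac{1}{f-a_j}\right)+O(1). \]

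The third ingredient is a separation lemma. Putting $\delta=\min_{i\ne j}|a_i-a_j|>0$, the strong triangle inequality shows that for each fixed $r$ at most one index $i=i(r)$ can have $|f-a_i|_r<\delta$ (otherwise $|a_i-a_j|<\delta$), while every other proximity term is $O(1)$; thus $\sum_j m(r,\tfrac{1}{f-a_j})\le\log^+\tfrac{1}{|f-a_i|_r}+O(1)$.

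The crux of the proof, and the step I expect to cause the most trouble, is to extract the sharp $-\log r$. From $\tfrac{1}{f-a_i}=\tfrac{1}{f'}\cdot\tfrac{(f-a_i)'}{f-a_i}$, multiplicativity of $|\cdot|_r$ together with $|g'|_r\le r^{-1}|g|_r$ gives $\tfrac{1}{|f-a_i|_r}\le\tfrac{1}{r|f'|_r}$, while Jensen's formula gives $N(r,\tfrac{1}{f'})=\log|f'|_r+N(r,f')+O(1)$ with $N(r,f')=N(r,f)+\overline N(r,f)$. Adding the proximity bound to $N(r,\tfrac1{f'})$ and splitting into the cases $|f'|_r\le r^{-1}$ and $|f'|_r>r^{-1}$ to handle the $\log^+$ honestly, the $\log|f'|_r$ terms cancel and one is left with
\[ \sum_{j=1}^q m\!\left(r,\frac{1}{f-a_j}\right)+N\!\left(r,\frac{1}{f'}\right)\le T(r,f)+\overline N(r,f)-\log r+O(1). \]
Substituting this bound gives $(q-1)T(r,f)\le\sum_j\overline N(r,\tfrac{1}{f-a_j})+\overline N(r,f)-\log r+O(1)$, and absorbing $\overline N(r,f)\le N(r,f)\le T(r,f)+O(1)$ produces the asserted factor $q-2$. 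The only genuinely delicate point is positive characteristic, where $f'$ may vanish identically or lose order (for instance if $f$ is a $p$-th power); there one replaces the ordinary derivative by a suitable Hasse derivative, or reduces to the separable case, but the structure of the estimate is unchanged.
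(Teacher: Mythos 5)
Your proof is correct in characteristic zero, and the comparison with the paper is quickly settled: the paper does not prove this theorem at all --- it is quoted from Hu and Yang \cite{HY1} --- and your argument is essentially the standard proof given there and elsewhere in the non-Archimedean literature (summed First Main Theorem, ramification absorbed into $N\big(r,\frac{1}{f'}\big)$, the ultrametric separation showing at most one proximity term $m\big(r,\frac{1}{f-a_j}\big)$ is non-negligible at each $r$, and the sharp bound $|g'|_r\le r^{-1}|g|_r$ as the source of $-\log r$). Two bookkeeping remarks. First, in your case $|f'|_r>r^{-1}$ the $\log|f'|_r$ terms do not cancel; there you need the further estimate $\log|f'|_r\le \log|f|_r-\log r\le m(r,f)-\log r$, i.e.\ a second application of $|g'|_r\le r^{-1}|g|_r$, after which your displayed inequality $\sum_j m\big(r,\frac{1}{f-a_j}\big)+N\big(r,\frac{1}{f'}\big)\le T(r,f)+\overline{N}(r,f)-\log r+O(1)$ does follow. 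Second, $N(r,f')=N(r,f)+\overline{N}(r,f)$ is a characteristic-zero identity (pole orders of $f'$ can drop modulo $p$), but only the inequality $N(r,f')\le N(r,f)+\overline{N}(r,f)$ is used, and that always holds.

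The one genuine error is your closing sentence about positive characteristic. This is not a repairable delicacy of the proof: the statement itself is false when $\mathrm{char}\,\mathbf{K}=p>0$. Take $f(z)=z^p$, so that $T(r,f)=p\log r+O(1)$, and take $q\ge 4$ distinct values $a_1,\dots,a_q$. Since $\mathbf{K}$ is algebraically closed of characteristic $p$, one has $z^p-a_j=(z-a_j^{1/p})^p$, hence $\overline{N}\big(r,\frac{1}{f-a_j}\big)=\log r+O(1)$ for each $j$, and the asserted inequality would force $(q-2)p\log r\le (q-1)\log r+O(1)$, i.e.\ $(q-2)p\le q-1$, which fails for every $p\ge 2$ once $q\ge 4$. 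Consequently no replacement of $f'$ by a Hasse derivative and no reduction to the separable case can keep ``the structure of the estimate unchanged'': the theorem genuinely requires characteristic zero (the setting in which \cite{HY1} proves it), or else a restricted class of $f$ with a weaker conclusion. To be fair, the fault here lies with the paper's blanket hypothesis of ``arbitrary characteristic'' in its setup rather than with the core of your argument, but as a proof of the literal statement your remark papers over a counterexample rather than a technicality.
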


\section{Proof of Theorem~\ref{th1}}
We first consider the following lemma. 
\begin{lemma}\label{lm1}Let $f$ be a nonconstant meromorphic function on $\mathbf{K}.$ Let $a_1,\dots,a_5$ be distinct small functions with respect to $f.$ We have $$ 2T(r,f)\leq\sum_{i=1}^5 \overline{N}\big (r,\frac{1}{f-a_i}\big)+S(r,f).$$
\end{lemma}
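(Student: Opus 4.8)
The plan is to reduce the five small targets to a normalised configuration, read off a first inequality with coefficient $1$ from the truncated second fundamental theorem for constant targets (Theorem~\ref{sm}), and then recover the missing copy of $T(r,f)$ from the two remaining small targets by means of a Cartan/Wronskian-type auxiliary function, using the non-Archimedean logarithmic derivative lemma to absorb every error term into $S(r,f)$.

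First I would normalise. A cross-ratio M\"obius transformation with small-function coefficients preserves the characteristic up to $S(r,f)$ and carries each reduced counting function $\overline{N}\big(r,\frac{1}{f-a_i}\big)$ to the reduced counting function of the transformed function at the image target. Applying $F=\frac{(f-a_1)(a_2-a_3)}{(f-a_3)(a_2-a_1)}$, which sends $a_1,a_2,a_3$ to $0,1,\infty$, replaces the problem by one for $F$ with three constant targets $0,1,\infty$ and two small targets $b_4,b_5$ (the images of $a_4,a_5$), with $T(r,F)=T(r,f)+S(r,f)$. Theorem~\ref{sm} applied to $F$ and the three constants immediately gives $T(r,F)\le\overline{N}(r,\frac{1}{F})+\overline{N}(r,\frac{1}{F-1})+\overline{N}(r,F)+S(r,f)$, i.e. the coefficient-$1$ inequality $T(r,f)\le\overline{N}\big(r,\frac{1}{f-a_1}\big)+\overline{N}\big(r,\frac{1}{f-a_2}\big)+\overline{N}\big(r,\frac{1}{f-a_3}\big)+S(r,f)$.

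The crux is upgrading the coefficient from $1$ to $2$, and this cannot be done by adding several three-target inequalities: any two triples chosen from five targets must overlap (since $3+3>5$), so a shared counting function gets doubled, and averaging over all triples returns only $\tfrac{5}{3}T(r,f)$ — exactly the bound this Lemma is meant to beat. A genuinely different device is therefore needed. Following the pattern of the Cartan auxiliary functions used in the complex small-function uniqueness theorems, I would introduce Wronskian-type combinations, for instance $W(f-a_i,f-a_j)=-(a_i-a_j)f'+(a_i'-a_j')f+W(a_i,a_j)$, whose characteristic is $O(T(r,f))$ and which vanish at the multiple $a_i$- and $a_j$-points of $f$, and combine them with the normalised inequality above. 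These Wronskians convert counting with multiplicity into the level-$1$ truncated counting at a cost of only $S(r,f)$, while the two extra small targets $b_4,b_5$ are arranged to contribute the missing copy of $T(r,f)$; the non-Archimedean Jensen (Poisson--Jensen) formula, which is exact up to $O(1)$ in this setting, is used to turn the resulting modulus identities into inequalities between valence functions, each target being charged only once.

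Combining the two inequalities and cancelling the over-counted term through this bookkeeping gives $2T(r,f)\le\sum_{i=1}^{5}\overline{N}\big(r,\frac{1}{f-a_i}\big)+S(r,f)$. The main obstacle is precisely the construction in the previous step: designing the auxiliary function so that the two extra small functions contribute a full additional $T(r,f)$ while the truncation stays at level $1$ and no zero is charged twice. This is where the strength of the non-Archimedean setting is decisive — because $m\big(r,\frac{f^{(k)}}{f}\big)=O(1)$, rather than $O(\log(rT(r,f)))$ as over $\C$, every compensation term and every ramification correction produced by the Wronskians is harmless, so the delicate work is the combinatorial matching of divisors rather than the analytic estimation of error terms.
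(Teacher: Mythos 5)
Your overall instinct matches the paper's strategy in outline: normalise three of the targets to $0,1,\infty$ by a M\"obius transformation with small coefficients (the paper uses $F=\frac{f-a_2}{f-a_1}\cdot\frac{a_3-a_1}{a_3-a_2}$, disposing of the case where $a_4$ or $a_5$ is constant directly by Theorem~\ref{sm}), then gain the extra copy of $T(r,f)$ from the two remaining nonconstant small targets via a Wronskian-type auxiliary function, with the non-Archimedean logarithmic derivative lemma absorbing all error terms into $S(r,f)$. Your observation that no averaging of three-target inequalities can reach coefficient $2$ is also correct. But the proposal stops exactly where the proof begins: you never construct the auxiliary function, and you concede this yourself (``the main obstacle is precisely the construction in the previous step''). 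The pairwise Wronskians $W(f-a_i,f-a_j)$ you suggest do vanish at multiple $a_i$-points, which addresses truncation, but you give no mechanism by which they produce the additional $T(r,f)$; asserting that $b_4,b_5$ ``are arranged to contribute the missing copy'' is not an argument, and it is neither known nor obvious that $2\times 2$ Wronskians suffice here.

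What the paper actually does, and what is missing from your sketch, is the following chain. It sets
$H=\det\begin{pmatrix} ff'&f'&f(f-1)\\ a_4a_4'&a_4'&a_4(a_4-1)\\ a_5a_5'&a_5'&a_5(a_5-1)\end{pmatrix}$,
proves $H\not\equiv 0$ by a four-case analysis on the logarithmic derivatives of $a_4,a_5$ (each degenerate case forcing either constancy of $a_4,a_5$ or $T(r,f)=S(r,f)$); then --- a point your sketch ignores entirely --- it introduces $\delta(r)=\min\{1,|a_4|_r,|a_5|_r,|a_4-1|_r,|a_5-1|_r,|a_4-a_5|_r\}$ and shows that for each $r$ at most one index $i\in\{2,\dots,5\}$ can have $|f-a_i|_r\le\frac12\delta(r)$, so all but one proximity term is $S(r,f)$; this separation step is essential in the non-Archimedean setting, not cosmetic. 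For the one exceptional index, column reduction of $H$ plus the logarithmic derivative lemma gives $m\big(r,\frac{1}{f-a_i}\big)\le m\big(r,\frac{1}{H}\big)+S(r,f)$, and the First Main Theorem yields $4T(r,f)\le\sum N\big(r,\frac{1}{f-a_i}\big)+T(r,H)-N\big(r,\frac{1}{H}\big)+S(r,f)$, the multiple zeros of each $f-a_i$ being absorbed into $N\big(r,\frac{1}{H}\big)$ to achieve truncation at level $1$. Finally, the explicit factorisation of $H$ gives $T(r,H)\le 2T(r,f)+\overline{N}(r,f)+S(r,f)$, and it is precisely here that the fifth counting function $\overline{N}(r,f)$ (the target $a_1=\infty$) enters and the net coefficient $4-2=2$ emerges. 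None of this bookkeeping --- the nonvanishing of $H$, the $\delta(r)$ separation, the conversion of multiple zeros into zeros of $H$, and the bound on $T(r,H)$ --- appears in your proposal, so as it stands it is a plausible plan rather than a proof.
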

\begin{proof}
By the transformation $$ F=\frac{f-a_2}{f-a_1}\cdot\frac{a_3-a_1}{a_3-a_2},$$ we just need to prove the theorem in the case that $a_1=\infty,a_2=0,a_3=1,$ $a_4,a_5\not\equiv 0,1,\infty,a_4\not\equiv a_5.$ If one of $a_4$ and $a_5$ is constant, then we need to prove nothing according to the second main theorem for constants. Thus, we may assume that both $a_4$ and $a_5	$ are nonconstant small functions of $f.$ Set
\begin{equation}\label{1}
H=\begin{vmatrix}
ff'&f'&f(f-1)\\
a_4a'_4&a'_4&a_4(a_4-1)\\
a_5a'_5&a'_5&a_5(a_5-1)
\end{vmatrix}
\end{equation}
By a simple computation, we get
\begin{align}\label{2}
H=&f(f-1)a_4(a_4-1)a_5(a_5-1)\Big[\big(\frac{a'_4}{a_4}-\frac{a'_5}{a_5}\big)\big(\frac{f'}{f-1}-\frac{a'_5}{a_5-1}\big)\notag\\
&-\big(\frac{a'_4}{a_4-1}-\frac{a'_5}{a_5-1}\big)\big(\frac{f'}{f}-\frac{a'_5}{a_5}\big)\Big].
\end{align}
We claim that $H\not\equiv 0.$ Indeed, on the contrary, assume that $H\equiv 0.$ Since $f$ is not constant and $a_4,a_5\not\equiv 0,1,$ it follows from \eqref{1} that 
\begin{equation}\label{3} 
\Big(\frac{a'_4}{a_4}-\frac{a'_5}{a_5}\Big)\frac{f'}{f-1}-\Big(\frac{a'_4}{a_4-1}-\frac{a'_5}{a_5-1}\Big)\frac{f'}{f}\equiv \Big(\frac{a'_4}{a_4}-\frac{a'_5}{a_5}\Big)\frac{a'_5}{a_5-1}-\Big(\frac{a'_4}{a_4-1}-\frac{a'_5}{a_5-1}\Big)\frac{a'_5}{a_5}.
\end{equation}

We now distinguish four cases

{\it Case 1.} $\frac{a'_4}{a_4}\equiv\frac{a'_5}{a_5}.$ It follows from \eqref{3} that $\frac{a'_4}{a_4-1}\equiv\frac{a'_5}{a_5-1}$ or $\frac{f'}{f}\equiv\frac{a'_5}{a_5}.$ If $\frac{a'_4}{a_4-1}\equiv\frac{a'_5}{a_5-1}$ then $a_4$ and $a_5$ are constants, which contradicts our assumption. This means $\frac{f'}{f}\equiv\frac{a'_5}{a_5}.$ Hence, we get $f=ca_5,$ where $c$ is a constant. This is a contradiction.

{\it Case 2.} $\frac{a'_4}{a_4-1}\equiv\frac{a'_5}{a_5-1}.$ By an argument similar to Case 1, we also get a contradiction.

{\it Case 3.} $\frac{a'_4}{a_4}-\frac{a'_5}{a_5}\equiv \frac{a'_4}{a_4-1}-\frac{a'_5}{a_5-1}\not\equiv 0.$ It follows from \eqref{3} that $$\frac{f'}{f-1}-\frac{f'}{f}\equiv\frac{a'_5}{a_5-1}-\frac{a'_5}{a_5},$$ which implies $$  \frac{f-1}{f}\equiv C \frac{a_5-1}{a_5},$$ where $C$ is a constant. Thus, we obtain $$ \frac{1}{f}\equiv 1- C \frac{a_5-1}{a_5}.$$ It follows that $$ T(r,f)=T\big(r,\frac{1}{f}\big)+O(1)=S(r,f). $$ This is a contradiction.

{\it Case 4.} $\frac{a'_4}{a_4}\not\equiv\frac{a'_5}{a_5},$ $\frac{a'_4}{a_4-1}\not\equiv\frac{a'_5}{a_5-1}$ and $\frac{a'_4}{a_4}-\frac{a'_5}{a_5}\not\equiv \frac{a'_4}{a_4-1}-\frac{a'_5}{a_5-1}.$ Then, it follows from \eqref{3}that the zeros of $f-1$ can only occur at the zeros or $1-$points or the poles of $a_j\quad (j=4,5)$ or the zeros of $\frac{a'_4}{a_4}-\frac{a'_5}{a_5}.$ Similar, the zeros of $f$ can only occur at the zeros or $1-$points or the poles of $a_j\quad (j=4,5)$ or the zeros of $\frac{a'_4}{a_4-1}-\frac{a'_5}{a_5-1}.$ 
Furthermore, from \eqref{3}, we also can be seen that the poles of $f$ can only occur at the zeros or $1-$points or the poles of $a_j\quad (j=4,5)$ or the zeros of $\frac{a'_4}{a_4}-\frac{a'_5}{a_5}- \frac{a'_4}{a_4-1}+\frac{a'_5}{a_5-1}.$ Therefore, we get
\begin{equation}\label{4}
\overline{N}(r,f)+\overline{N}\big(r,\frac{1}{f}\big)+\overline{N}\big(r,\frac{1}{f-1}\big)=S(r,f).
\end{equation}
By \eqref{4} and applying the Second Main Theorem for $f$ and $0,1,\infty,$ we have
\begin{equation*}T(r,f)\leq \overline{N}(r,f)+\overline{N}\big(r,\frac{1}{f}\big)+\overline{N}\big(r,\frac{1}{f-1}\big)-\log r+O(1)=S(r,f).
\end{equation*} This is a contradiction again. Thus, we must have $H\not\equiv 0.$

Given a real number $0 < r < \infty.$ Let $$ \delta(r)=\min\{1,|a_4|_r,|a_5|_r,|a_4-1|_r,|a_5-1|_r,|a_4-a_5|_r\}.$$ Then, we have 
\begin{align*}
\log^+\frac{1}{\delta(r)}&\leq\log^+\max\{1,\frac{1}{|a_4|_r},\frac{1}{|a_5|_r},\frac{1}{|a_4-1|_r},\frac{1}{|a_5-1|_r},\frac{1}{|a_4-a_5|_r}\}\\
&\leq \log^+\Big(1+\frac{1}{|a_4|_r}+\frac{1}{|a_5|_r}+\frac{1}{|a_4-1|_r}+\frac{1}{|a_5-1|_r}+\frac{1}{|a_4-a_5|_r}\Big)\\
&\leq\log^+\frac{1}{|a_4|_r}+\log^+\frac{1}{|a_5|_r}+\log^+\frac{1}{|a_4-1|_r}+\log^+\frac{1}{|a_5-1|_r}\\
&\quad+\log^+\frac{1}{|a_4-a_5|_r}+\log 6\\
&=m\big(r,\frac{1}{a_4}\big)+m\big(r,\frac{1}{a_5}\big)+m\big(r,\frac{1}{a_4-1}\big)+m\big(r,\frac{1}{a_5-1}\big)\\
&\quad+m\big(r,\frac{1}{a_4-a_5}\big)+\log 6.
\end{align*} Hence, we get 
\begin{equation}\label{5}\log^+\frac{1}{\delta(r)}=S(r,f).\end{equation}

We first consider the case that $$ |f-a_j|_r >\frac{1}{2}\delta(r),$$ for all $2\leq j\leq 5.$ In this case,
\begin{align}\label{f1}
m\big(r,\frac{1}{f}\big)+m\big(r,\frac{1}{f-1}\big)+m\big(r,\frac{1}{f-a_4}\big)+m\big(r,\frac{1}{f-a_5}\big)&<5\log^+\frac{1}{\delta(r)}+O(1)\notag\\
&=S(r,f).
\end{align}

Now let $i,\,2\leq i\leq 5,$ be the index among $\{2,3,4,5\}$ such that $$ |f-a_i|_r\leq\frac{1}{2}\delta(r). $$ Then for any $j\ne i,\,2\leq j\leq 5,$ we have $$ \delta(r) \leq |a_i-a_j|_r\leq |f-a_i|_r+|f-a_j|_r\leq \frac{1}{2}\delta(r)+|f-a_j|_r,$$ so $$ |f-a_j|_r\geq \frac{1}{2}\delta(r).$$ Therefore, for $j\ne i,$ we have 
\begin{align*}
\sum_{\substack{j=2\\ j\ne i}}^5 m\big(r,\frac{1}{f-a_j}\big)=\sum_{\substack{j=2\\j\ne i}}^5 \log^+\frac{1}{|f-a_j|_r}\leq 3\log^+\frac{1}{\delta(r)}.
\end{align*} Combining \eqref{5} and the above inequality, we get 
 \begin{equation}\label{6}\sum_{\substack{j=2\\ j\ne i}}^5 m\big(r,\frac{1}{f-a_j}\big)=S(r,f).\end{equation}
On the other hand, for $2\leq i\leq 5,$ we can write
\begin{align*}
&ff'=(f-a_i)(f'-a'_i)+a'_i(f-a_i)+a_i(f'-a'_i)+a_ia'_i,\\
&f'=(f'-a'_i)+a'_i,\\
&f(f-1)=f^2-f=(f-a_i)^2+(2a_i-1)(f-a_i)+a_i^2-a_i.
\end{align*}
By substituting the above equalities into \eqref{1} and using the determinant's properties, we get
\begin{equation}\label{7}
H=\begin{vmatrix}
g_i&f'-a'_i&h_i\\
a_4a'_4&a'_4&a_4(a_4-1)\\
a_5a'_5&a'_5&a_5(a_5-1)
\end{vmatrix}, 
\end{equation} where $$g_i=(f-a_i)(f'-a'_i)+a'_i(f-a_i)+a_i(f'-a'_i),$$ $$h_i=(f-a_i)^2+(2a_i-1)(f-a_i)$$ for $2\leq i\leq 5,$ note that $a_2=0, a_3=1.$
By the definition of $\delta(r),$ we have $\delta(r)\leq 1+|a_i|_r.$ Hence, $$ \log^+\delta(r)\leq \log^+(1+|a_i|_r)\leq \log^+|a_i|_r+\log 2=m(r,a_i)+\log 2=S(r,f).$$
Thus, it follows from \eqref{7} and the Logarithmic Derivative Lemma that 
\begin{align*}
\log^+\Big|\frac{H}{f-a_i}\Big|_r&\leq\log^+\Big |\frac{f'-a'_i}{f-a_i}\Big |_r+\log^+|f-a_i|_r\\
&\quad+O(\log^+|a_i|_r+\log^+|a'_i|_r+\log^+|a_4|_r+\log^+|a'_4|_r\\
&\quad+\log^+|a_5|_r+\log^+|a'_5|_r)\\
&\leq m\big (\frac{f'-a'_i}{f-a_i}\big )+\log^+\delta(r)+S(r,f)\\
&=S(r,f).
\end{align*}
Hence, we get
\begin{align}\label{8}
m\big(r,\frac{1}{f-a_i}\big)&=\log^+\frac{1}{|f-a_i|_r}\leq \log^+\Big|\frac{H}{f-a_i}\Big|_r+\log^+\Big|\frac{1}{H}\Big|_r\notag\\
&\leq m\big(r,\frac{1}{H}\big)+S(r,f).
\end{align}
It follows from \eqref{f1}, \eqref{7} and \eqref{8} that in any case, we have
\begin{align}\label{9}
m\big(r,\frac{1}{f}\big)+m\big(r,\frac{1}{f-1}\big)+m\big(r,\frac{1}{f-a_4}\big)+m\big(r,\frac{1}{f-a_5}\big)\leq m\big(r,\frac{1}{H}\big)+S(r,f).
\end{align}
Hence, by the First Main Theorem, we get
\begin{align}\label{10}
4T(r,f)&\leq N\big(r,\frac{1}{f}\big)+N\big(r,\frac{1}{f-1}\big)+N\big(r,\frac{1}{f-a_4}\big)+N\big(r,\frac{1}{f-a_5}\big) \notag\\
&\quad+T(r,H)-N\big(r,\frac{1}{H}\big)+S(r,f).
\end{align}
On the other hand, suppose that $z_0$ be a zero of $f-a_i,$ $(2\leq i\leq 5)$ of order $s>1$ which is not a pole of $a_4$ or $a_5.$ Then, it follows from \eqref{7} that $z_0$ is also a zero of $H$ of order at least $s-1.$ Hence, from \eqref{10} and the above observations, we get
\begin{align}\label{11}
4T(r,f)&\leq \overline{N}\big(r,\frac{1}{f}\big)+ \overline{N}\big(r,\frac{1}{f-1}\big)+ \overline{N}\big(r,\frac{1}{f-a_4}\big)+ \overline{N}\big(r,\frac{1}{f-a_5}\big) \notag\\
&\quad+T(r,H)+S(r,f).
\end{align}
From \eqref{2}, we have 
\begin{align*}
m(r,H)&\leq 2m(r,f)+S(r,f),\\
N(r,H)&\leq 2N(r,f)+\overline{N}(r,f)+S(r,f).
\end{align*}
Hence, we get
\begin{equation}\label{12}
T(r,H)\leq 2T(r,f)+\overline{N}(r,f)+S(r,f).
\end{equation}
Combining \eqref{11} and \eqref{12}, we obtain
$$2T(r,f)\leq \overline{N}\big(r,\frac{1}{f}\big)+ \overline{N}\big(r,\frac{1}{f-1}\big)+ \overline{N}\big(r,\frac{1}{f-a_4}\big)+ \overline{N}\big(r,\frac{1}{f-a_5}\big)+S(r,f). $$ This completes the proof of Lemma~\ref{lm1}.
\end{proof}

\begin{proof}[Proof of Theorem~\ref{th1}.] 
By Lemma~\ref{lm1}, for every subset $\{i_1,\dots,i_5\}$ of $\{1,\dots,q\}$ such that $1\leq i_1<\dots< i_5\leq q,$ we have 
\begin{equation}\label{t1} 2T(r,f)\leq\sum_{s=1}^5 \overline{N}\big (r,\frac{1}{f-a_{i_s}}\big)+S(r,f).\end{equation}
It is easily seen that the number of such inequalities is $\mathrm{C}_q^5.$ Taking summing up of \eqref{t1} over all subsets $\{i_1,\dots,i_5\}$ of $\{1,\dots,q\}$ as above, we get
\begin{align}\label{t2} 2\mathrm{C}_q^5T(r,f)\leq&\sum_{\substack{\{i_1,\dots,i_5\}\subset\{1,\dots,q\}\\ 1\leq i_1<\dots< i_5\leq q}}\Big( \overline{N}\big (r,\frac{1}{f-a_{i_1}}\big)+\overline{N}\big (r,\frac{1}{f-a_{i_2}}\big)+\overline{N}\big (r,\frac{1}{f-a_{i_3}}\big)\notag\\
&+\overline{N}\big (r,\frac{1}{f-a_{i_4}}\big)+\overline{N}\big (r,\frac{1}{f-a_{i_5}}\big)\Big )+S(r,f).\end{align}
In \eqref{t2}, for each index $i_k,$ the number of terms $\overline{N}\big (r,\frac{1}{f-a_{i_k}}\big)$ is $\mathrm{C}_{q-1}^4.$ Hence, from \eqref{t2}, we get
$$ 2\mathrm{C}_q^5T(r,f)\leq\mathrm{C}_{q-1}^4\sum_{i=1}^q \overline{N}\big (r,\frac{1}{f-a_i}\big)+S(r,f). $$
It follows that $$ \frac{2q}{5}T(r,f)\leq\sum_{i=1}^q \overline{N}\big (r,\frac{1}{f-a_i}\big)+S(r,f).$$ This completes the proof of Theorem~\ref{th1}.
\end{proof}

\section{Proof of Theorem~\ref{th2}}
To prove Theorem~\ref{th2}, we need to prove the following lemma.
\begin{lemma}\label{lm2}
Let $f$ and $g$ be nonconstant meromorphic functions on $\mathbf{K}$ and $a_1,\dots,a_q$ be $q$ distinct small functions with respect to $f$ and $g.$ Let $k_1,\dots,k_q$ be $q$ positive integers or $+\infty.$ Suppose that $$ \overline{E}(a_j,k_j,f)=  \overline{E}(a_j,k_j,g)\quad (j=1,\dots,q).$$ If $f\not\equiv g,$ then for every subset $\{i_1,i_2,i_3,i_4\}$ of $\{1,\dots,q\},$ we have
\begin{align*}\sum_{j\in\{1,\dots,q\}\setminus\{i_1,\dots,i_4\}}\overline{N}_{k_j)}\big (r,\frac{1}{f-a_j}\big )\leq&\sum_{s=1}^4\Big(\overline{N}_{(k_{i_s}+1}\big(r,\frac{1}{f-a_{i_s}}\big)+\overline{N}_{(k_{i_s}+1}\big(r,\frac{1}{g-a_{i_s}}\big)\Big) \\
&+S(r,f)+S(r,g).\end{align*}
\end{lemma}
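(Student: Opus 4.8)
The plan is to convert the sharing hypothesis into a statement about the single function $F:=f-g$ and then to control the relevant zeros of $F$ by a Cartan-type auxiliary function attached to the four distinguished small functions $a_{i_1},\dots,a_{i_4}$, in the spirit of the Wronskian $H$ used in Lemma~\ref{lm1}. The one and only role of the hypothesis $f\not\equiv g$ is to guarantee $F\not\equiv 0$, so that the First Main Theorem and the Logarithmic Derivative Lemma apply to $F$. The entry point is the following observation: fix $j\in\{1,\dots,q\}\setminus\{i_1,\dots,i_4\}$ and let $z_0$ be a zero of $f-a_j$ of multiplicity $\le k_j$ that is not a zero or pole of any $a_\ell$ or of any $a_\ell-a_{\ell'}$. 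Then $z_0\in\overline{E}(a_j,k_j,f)=\overline{E}(a_j,k_j,g)$, so $g(z_0)=a_j(z_0)$ as well, whence $F(z_0)=(f-a_j)(z_0)-(g-a_j)(z_0)=0$. Thus every point counted on the left-hand side is a common zero of $f-a_j$ and $g-a_j$, and distinct $j$ produce distinct such points apart from the zero sets of the $a_\ell-a_{\ell'}$, which cost only $S(r,f)+S(r,g)$.

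The subtlety — and the reason one cannot simply bound the left-hand side by $N(r,\tfrac{1}{F})$ — is that $F$ also vanishes at every generic coincidence $\{f=g\}$ whose common value is none of the $a_\ell$, and that $F$ has poles wherever $f$ or $g$ does; neither of these may be charged to the right-hand side. So I would instead build an auxiliary function $\Phi$ out of $f,g$ and $a_{i_1},\dots,a_{i_4}$ together with their derivatives, a determinant of the same flavour as \eqref{1}, designed so that: (i) $m(r,\Phi)=S(r,f)+S(r,g)$ by the Logarithmic Derivative Lemma; (ii) each common simple zero of $f-a_j,g-a_j$ with $j\notin\{i_1,\dots,i_4\}$ forces a zero of $\Phi$; (iii) $\Phi$ stays regular and nonzero both at the off-target coincidences of $f$ and $g$ and at the poles of $f$ and $g$, so that these do not contribute; and (iv) the poles of $\Phi$ sit only over the four distinguished targets and, after the cofactor cancellation, are charged only to the zeros of $f-a_{i_s}$ and of $g-a_{i_s}$ of multiplicity $>k_{i_s}$. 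As in Lemma~\ref{lm1}, the derivative structure is what yields truncation: a zero of $f-a_{i_s}$ of order $s>k_{i_s}$ becomes a zero of the relevant minor of order $\ge s-1$, so the corresponding poles of $\Phi$ are counted only once, producing $\overline{N}_{(k_{i_s}+1}$ rather than the full $N$.

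Granting (i)--(iv), the conclusion follows formally. By (ii) the left-hand side is at most $\overline{N}(r,\tfrac{1}{\Phi})$, and by the First Main Theorem $\overline{N}(r,\tfrac{1}{\Phi})\le N(r,\tfrac{1}{\Phi})\le T(r,\Phi)+O(1)=m(r,\Phi)+N(r,\Phi)+O(1)$, which by (i) and (iv) is bounded by $\sum_{s=1}^4\big(\overline{N}_{(k_{i_s}+1}(r,\tfrac{1}{f-a_{i_s}})+\overline{N}_{(k_{i_s}+1}(r,\tfrac{1}{g-a_{i_s}})\big)+S(r,f)+S(r,g)$. The two families of multiple-zero terms (for $f$ and for $g$) should enter symmetrically, which suggests building $\Phi$ symmetrically in the pair $(f,g)$, or else running the argument once for $f$ and once for $g$ and then identifying the two counts by noting that the truncated-simple zero sets of $f-a_j$ and of $g-a_j$ coincide, precisely by the sharing hypothesis $\overline{E}(a_j,k_j,f)=\overline{E}(a_j,k_j,g)$.

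I expect the main obstacle to be the simultaneous realisation of (ii) and (iii): producing a function built from only the four distinguished small functions that detects \emph{exactly} the shared simple zeros lying over the remaining $q-4$ targets, while remaining nonzero at the generic solutions of $f=g$ and at the poles of $f$ and $g$. This is the step where the sharing hypothesis and the determinant cancellations must be used in tandem, and where the delicate multiplicity bookkeeping behind the truncation $\overline{N}_{(k_{i_s}+1}$ has to be carried out in full.
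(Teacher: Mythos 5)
Your outline reproduces the high-level architecture of the paper's proof --- an auxiliary function with small proximity function, vanishing at the shared points over the remaining $q-4$ targets, and with poles only over the four distinguished targets --- but it stops exactly where the proof begins. The paper's entire content is the explicit construction: after normalizing $a_{i_1}=0,\,a_{i_2}=\infty,\,a_{i_3}=1$ and writing $a=a_{i_4}$, it takes
$$T=\frac{f'(a'g-ag')(f-g)}{f(f-1)g(g-a)}-\frac{g'(a'f-af')(f-g)}{g(g-1)f(f-a)},$$
and then verifies, by a case-by-case local analysis (common poles of $f,g$; common zeros; common zeros of $f-a_i,\,g-a_i$; unshared zeros; cross coincidences $f=a_i,\,g=a_j$ with $i\ne j$), that $T$ has exactly your properties (i)--(iv). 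You explicitly defer the construction of $\Phi$ and call realising (ii)--(iv) simultaneously ``the main obstacle'' --- but that obstacle \emph{is} the lemma. A plan that postulates the key auxiliary function with precisely the properties needed, without exhibiting it or verifying them, has not proved anything.

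Beyond the missing construction, two specific points in your plan would fail even as a blueprint. First, your claim that the only role of $f\not\equiv g$ is to make $F=f-g\not\equiv 0$ is insufficient: one must also rule out the degeneracy $\Phi\equiv 0$, and nothing in (i)--(iv) prevents it. The paper handles this by first reducing to the case where the left-hand side is not already $S(r,f)+S(r,g)$ --- which supplies a generic common zero $z_0$ of $f-a_j$ and $g-a_j$ producing a pole/regularity contradiction in the identity forced by $T\equiv 0$ --- and the subcase where $a$ is constant uses $f\not\equiv g$ directly. Second, your truncation mechanism is the wrong one: you import from Lemma~\ref{lm1} the idea that differentiation drops a zero of order $s>k_{i_s}$ to order $s-1$, but that mechanism is blind to the sharing hypothesis and could never produce the restriction to multiplicities $\ge k_{i_s}+1$. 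In the actual proof the logic is reversed: because $T$ is built from logarithmic derivatives, its poles over the four targets have order at most $1$ (at most $2$ at cross coincidences) \emph{regardless} of the multiplicity of the underlying zero, which gives the reduced counting; and the sharing hypothesis forces every zero of $f-a_{i_s}$ of multiplicity $\le k_{i_s}$ to be a \emph{common} zero of $f-a_{i_s}$ and $g-a_{i_s}$, at which the factor $f-g$ cancels the simple poles and $T$ is regular. Hence only unshared zeros, necessarily of multiplicity $\ge k_{i_s}+1$, contribute to $N(r,T)$ --- this interplay between the factor $f-g$ and the sharing hypothesis is the idea your proposal is missing.
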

\begin{proof}
Without losing generality, we just need to prove that 
\begin{align}\label{2.1}\sum_{i=5}^q\overline{N}_{k_i)}\big (r,\frac{1}{f-a_i}\big )\leq&\sum_{j=1}^4\Big(\overline{N}_{(k_j+1}\big(r,\frac{1}{f-a_j}\big)+\overline{N}_{(k_j+1}\big(r,\frac{1}{g-a_j}\big)\Big)\notag\\
& +S(r,f)+S(r,g).\end{align}

If $\sum_{i=5}^q\overline{N}_{k_i)}\big (r,\frac{1}{f-a_i}\big )=S(r,f)+S(r,g),$ then \eqref{2.1} obviously holds. Thus, in the following we may assume that 
\begin{equation}\label{2.2}\sum_{i=5}^q\overline{N}_{k_i)}\big (r,\frac{1}{f-a_i}\big )\ne S(r,f)+S(r,g).\end{equation}
By using the transformation $$ L(w)=\frac{w-a_1}{w-a_2}\cdot\frac{a_3-a_2}{a_3-a_1}$$ and considering two functions $F=L(f), G=L(g)$ if necessary, we may assume that $a_1=0,a_2=\infty, a_3=1$ and $a_4,\dots,a_q$ are distinct small functions with respect to $f$ and $g,$ $a_i\not\equiv 0,1,\infty\,\, (i=4,\dots,q).$

Set 
\begin{equation}\label{2.2}
T:=\frac{f'(a'g-ag')(f-g)}{f(f-1)g(g-a)}-\frac{g'(a'f-af')(f-g)}{g(g-1)f(f-a)},
\end{equation} where $a=a_4.$ Then we have
\begin{equation}\label{2.3}
T=\frac{(f-g)Q}{f(f-1)(f-a)g(g-1)(g-a)},
\end{equation} where 
\begin{align}\label{2.4}
Q&=f'(a'g-ag')(f-a)(g-1)-g'(a'f-af')(g-a)(f-1)\notag\\
&=a'ff'g^2-a'ff'g-a(a-1)ff'g'-aa'f'g^2+aa'f'g-a'f^2gg'\notag\\
&\quad+a'fgg'+a(a-1)f'gg'+aa'f^2g'-aa'fg'.
\end{align}

Suppose that $T\equiv 0.$ Then from \eqref{2.2} we have
\begin{equation}\label{2.5} \frac{f'(a'g-ag')(f-g)}{f(f-1)g(g-a)}\equiv\frac{g'(a'f-af')(f-g)}{g(g-1)f(f-a)}.\end{equation} If $a$ is a constant then $f\equiv g,$ which contradicts our assumption. Thus, $a$ is not a constant. It follows from \eqref{2.5} that 
\begin{equation*}\frac{(f-1)(g-a)}{(g-1)(f-a)}-1\equiv\frac{f'(a'g-ag')}{g'(a'f-af')}-1,\end{equation*} which implies 
\begin{equation*}\frac{(f-g)(1-a)}{(g-1)(f-a)}\equiv\frac{a'[(f'-g')g-(f-g)g']}{g'(a'f-af')}.\end{equation*} This yield that 
\begin{equation}\label{2.6}\frac{f'-g'}{f-g}\equiv\frac{(1-a)g'(a'f-af')}{a'g(g-1)(f-a)}+\frac{g'}{g}.\end{equation} 
It follows from \eqref{2.2} that there exists a point $z_0$ such that $z_0$ is a common zero of $f-a_j$ and $g-a_j$ that is not a zero or a pole of $a,a',a_j,a_j-1,a_j-a$ $(5\leq j\leq q).$ Then, $z_0$ must be a pole of the left hand side of \eqref{2.6}, and not be pole of the right hand side of \eqref{2.6}. This is a contradiction. Thus $T\not\equiv 0.$ 

Suppose that $z_1$ is a common zero of $f-a_j$ and $g-a_j$ that is not a zero or a pole of $a,a_j,a_j-1,a_j-a$ $(5\leq j\leq q).$ Then, it is a zero of $f-g$ and is not a pole of $$ \frac{Q}{f(f-1)(f-a)g(g-1)(g-a)},$$ and hence it is a zero of $T.$ Therefore, since $ \overline{E}(a_j,k_j,f)=  \overline{E}(a_j,k_j,g)\quad (j=1,\dots,q),$ we have 
\begin{align}\label{2.7}
\sum_{i=5}^q\overline{N}_{k_i)}\big (r,\frac{1}{g-a_i}\big )&=\sum_{i=5}^q\overline{N}_{k_i)}\big (r,\frac{1}{f-a_i}\big )\notag\\
&\leq N\big(r,\frac{1}{T}\big)+S(r,f)+S(r,g)\notag\\
&\leq m(r,T)+N(r,T)+S(r,f)+S(r,g).
\end{align}
We estimate $m(r,T).$ From \eqref{2.2} we get
\begin{align}\label{2.8}
T&=\frac{f'}{f-1}\frac{a'g-ag'}{g(g-a)}-\Big(\frac{f'}{f-1}-\frac{f'}{f}\Big)\frac{a'g-ag'}{g-a}\notag\\
&\quad +\frac{g'}{g-1}\frac{a'f-af'}{f(f-a)}-\Big(\frac{g'}{g-1}-\frac{g'}{g}\Big)\frac{a'f-af'}{f-a}\notag\\
&=\frac{f'}{f-1}\Big(\frac{g'}{g}-\frac{g'-a'}{g-a}\Big)-\Big(\frac{f'}{f-1}-\frac{f'}{f}\Big)\Big(a'-a\frac{g'-a'}{g-a}\Big)\notag\\
&\quad \frac{g'}{g-1}\Big(\frac{f'}{f}-\frac{f'-a'}{f-a}\Big)-\Big(\frac{g'}{g-1}-\frac{g'}{g}\Big)\Big(a'-a\frac{f'-a'}{f-a}\Big).
\end{align}
Combining \eqref{2.8} and lemma of the logarithmic derivative, we obtain 
\begin{equation}\label{2.9} m(r,T)=S(r,f)+S(r,g).\end{equation}

Next, we estimate the counting function $N(r,T).$ it follows from \eqref{2.2} that the poles of $T$ can only occur at the zeros of $f,g,f-1,g-1,f-a,g-a$ and the poles of $f,g,a.$ Let $A$ be the set of all zeros, $1-$points and poles of $a.$ We consider all the following possibilities.

If $z\not\in A$ is a common pole of $f$ and $g$ of order $p_1$ and $q_1,$ respectively. It follows from \eqref{2.4} that $z$ is a pole of $Q$ of order at most $2p_1+2q_1+1.$ We have $z$ is a pole of $f-g$ of order $\max\{p_1,q_1\}.$ Hence, from \eqref{2.3} we see that $z$ is a pole of the numerator of $T$ of order at most $ 2p_1+2q_1+1+\max\{p_1,q_1\}$ and it is a pole of the denominator of $T$ of order $3p_1+3q_1.$ Since $$2p_1+2q_1+1+\max\{p_1,q_1\}-(3p_1+3q_1)=1+max\{p_1,q_1\}-(p_1+q_1)\leq 0,$$ it follows that $z$ is not a pole of $T.$

If $z\not\in A$ is a common zero of $f$ and $g$ of order $p_2$ and $q_2,$ respectively. Then $z$ is a zero of $f-g$ of order $\min\{p_2,q_2\}.$ From \eqref{2.4} we have $z$ is a zero of $Q$ of order at least $p_2+q_2-1.$ From \eqref{2.3} we have $$\min\{p_2,q_2\}+p_2+q_2-1-(p_2+q_2)=\min\{p_2,q_2\}-1\geq 0.$$ Hence, $z$ is not a pole of $T.$

If $z\not\in A$ is a common zero of $f-a_i$ and $g-a_i$ with $i=3,4,$ note that $a_3=1, a_4=a.$ Then $z$ is a zero of $f-g.$ On the other hand, we have $z$ is a simple pole of $\frac{f'}{f-a_i}$ and $\frac{g'}{g-a_i}.$ Hence, it follows from \eqref{2.3} that $z$ is not a pole of $T.$ 

If $z\not\in A$ is a pole of $f$ but is not a pole of $g$ and is not a zero of $g,g-1$ and $g-a$ or $z\not\in A$ is a pole of $g$ but is not a pole of $f$ and is not a zero of $f,f-1$ and $f-a.$ It follows from \eqref{2.3} that $z$ is a pole of $T$ of order at most $1.$

If $z\not\in A$ is a zero of $f$ but is not a pole of $g$ and is not a zero of $g,g-1$ and $g-a$ or $z\not\in A$ is a zero of $g$ but is not a pole of $f$ and is not a zero of $f,f-1$ and $f-a.$ Then, it is easily derived from \eqref{2.3} that $z$ is a pole of $T$ of order at most $1.$

If $z\not\in A$ is a zero of $f-a_i$ with $i=3,4$ but is not a pole of $g$ and is not a zero of $g,g-1$ and $g-a$ or $z\not\in A$ is a zero of $g-a_i$ with $i=3,4$ but is not a pole of $f$ and is not a zero of $f,f-1$ and $f-a.$ Then, from \eqref{2.3} we see that $z$ is a pole of $T$ of order at most $1.$

If $z\not\in A$ is a common zero of $f-a_i$ and $g-a_j$ with $1\leq i,j\leq 4,\,\,i\ne j,$ here we regard $f-\infty=\frac{1}{f}$ and $g-\infty=\frac{1}{g}.$ Then, it follows from \eqref{2.3} that $z$ is a pole of $T$ of at most $2.$ 

So, from the assumption and the above observations, we get
\begin{equation}\label{2.10}
N(r,T)\leq \sum_{j=1}^4\Big(\overline{N}_{(k_j+1}\big(r,\frac{1}{f-a_j}\big)+\overline{N}_{(k_j+1}\big(r,\frac{1}{g-a_j}\big)\Big) +S(r,f)+S(r,g).
\end{equation}

By combining \eqref{2.7}, \eqref{2.9} and \eqref{2.10}, we get \eqref{2.1}. The proof of Lemma~\ref{lm2} is completed.
\end{proof}

\begin{proof}[Proof of Theorem~\ref{th2}.] 
Suppose that $f\not\equiv g.$ By Lemma~\ref{lm2}, for every subset $\{i_1,\dots,i_4\}$ of $\{1,\dots,q\},$ we have
\begin{align}\label{th2.1}
&\sum_{j=1}^q\overline{N}_{k_j)}\big (r,\frac{1}{f-a_j}\big )-\sum_{s=1}^4\overline{N}_{k_{i_s})}\big (r,\frac{1}{f-a_{i_s}}\big )\notag\\
&\leq\sum_{s=1}^4\Big(\overline{N}_{(k_{i_s}+1}\big(r,\frac{1}{f-a_{i_s}}\big)+\overline{N}_{(k_{i_s}+1}\big(r,\frac{1}{g-a_{i_s}}\big)\Big)+S(r,f)+S(r,g).
\end{align}
Taking summing up of \eqref{th2.1} over all subsets $\{i_1,\dots,i_4\}$ of $\{1,\dots,q\},$ we get
\begin{align*}
&\mathrm{C}_q^4\sum_{j=1}^q\overline{N}_{k_j)}\big (r,\frac{1}{f-a_j}\big )-\sum_{\substack{\{i_1,\dots,i_4\}\subset\{1,\dots,q\}\\ 1\leq i_1<\dots< i_4\leq q}}\sum_{s=1}^4\overline{N}_{k_{i_s})}\big (r,\frac{1}{f-a_{i_s}}\big )\notag\\
&\leq\sum_{\substack{\{i_1,\dots,i_4\}\subset\{1,\dots,q\}\\ 1\leq i_1<\dots< i_4\leq q}}\sum_{s=1}^4\Big(\overline{N}_{(k_{i_s}+1}\big(r,\frac{1}{f-a_{i_s}}\big)+\overline{N}_{(k_{i_s}+1}\big(r,\frac{1}{g-a_{i_s}}\big)\Big)\notag\\
&\quad+S(r,f)+S(r,g).
\end{align*}
In the above inequality, for each index $i_s,$ the number of terms $\overline{N}\big (r,\frac{1}{f-a_{i_s}}\big)$ is $\mathrm{C}_{q-1}^3.$ Hence, it follows that
\begin{align*}
(q-4)\sum_{j=1}^q\overline{N}_{k_j)}\big (r,\frac{1}{f-a_j}\big )\leq& 4\sum_{j=1}^q\Big(\overline{N}_{(k_{j}+1}\big(r,\frac{1}{f-a_{j}}\big)+\overline{N}_{(k_{j}+1}\big(r,\frac{1}{g-a_{j}}\big)\Big)\notag\\
&+S(r,f)+S(r,g).
\end{align*}
 By an argument similar, we have  
\begin{align*}
(q-4)\sum_{j=1}^q\overline{N}_{k_j)}\big (r,\frac{1}{g-a_j}\big )\leq& 4\sum_{j=1}^q\Big(\overline{N}_{(k_{j}+1}\big(r,\frac{1}{f-a_{j}}\big)+\overline{N}_{(k_{j}+1}\big(r,\frac{1}{g-a_{j}}\big)\Big)\notag\\
&+S(r,f)+S(r,g).
\end{align*}
Hence, we get
\begin{align}\label{th2.2}
&(q-4)\sum_{j=1}^q\Big (\overline{N}\big (r,\frac{1}{f-a_j}\big )+\overline{N}\big (r,\frac{1}{g-a_j}\big )\Big )\notag\\
&\quad\leq (q+4)\sum_{j=1}^q\Big(\overline{N}_{(k_j+1}\big(r,\frac{1}{f-a_j}\big)+\overline{N}_{(k_j+1}\big(r,\frac{1}{g-a_j}\big)\Big)+S(r,f)+S(r,g).
\end{align}
By Theorem~\ref{th1}, we have
\begin{align}\label{th2.3}
\frac{2q}{5}(T(r,f)+T(r,g))&\leq\sum_{j=1}^q\Big (\overline{N}\big (r,\frac{1}{f-a_j}\big )+\overline{N}\big (r,\frac{1}{g-a_j}\big )\Big )\notag\\
&\quad+S(r,f)+S(r,g).
\end{align}
Combining \eqref{th2.2} and \eqref{th2.3}, we get
\begin{align}\label{th2.4}
\frac{2q(q-4)}{5}(T(r,f)+T(r,g))&\leq (q+4)\sum_{j=1}^q\Big(\overline{N}_{(k_j+1}\big(r,\frac{1}{f-a_j}\big)+\overline{N}_{(k_j+1}\big(r,\frac{1}{g-a_j}\big)\Big)\notag\\
&\quad+S(r,f)+S(r,g).
\end{align}

On the other hand, we have
\begin{align}\label{th2.5}
&\sum_{j=1}^q\Big(\overline{N}_{(k_j+1}\big(r,\frac{1}{f-a_j}\big)+\overline{N}_{(k_j+1}\big(r,\frac{1}{g-a_j}\big)\Big)\notag\\
&\leq\sum_{j=1}^q\frac{1}{k_j+1}\Big(N_{(k_j+1}\big(r,\frac{1}{f-a_j}\big)+N_{(k_j+1}\big(r,\frac{1}{g-a_j}\big)\Big)\notag\\
&\leq \sum_{j=1}^q\frac{1}{k_j+1}(T(r,f)+T(r,g)).
\end{align}
The inequalities \eqref{th2.4} and \eqref{th2.5} imply
\begin{equation*}\Big (\frac{2q(q-4)}{5(q+4)}-\sum_{j=1}^q\frac{1}{k_j+1}\Big )(T(r,f)+T(r,g))\leq S(r,f)+S(r,g).
\end{equation*}Hence, when $\sum_{j=1}^q\frac{1}{k_j+1}<\frac{2q(q-4)}{5(q+4)},$ we have a contradiction. Thus, $f\equiv g.$ The proof of Theorem~\ref{th2} is completed.
\end{proof}

\section{Proof of Theorem~\ref{th3}}
First, we prove the following lemma.
\begin{lemma}\label{lm3}
Let $f$ be a nonconstant meromorphic function on $\mathbf{K}.$ Let $a_1,\dots,a_q$ be $q$ distinct small functions with respect to $f.$ Let $k$ be a positive integer or $+\infty.$ Then
$$\sum_{i=1}^q\overline{N}_{(k+1}\big (r,\frac{1}{f-a_i}\big )\leq \frac{3q}{5k}T(r,f)+S(r,f).$$
\end{lemma}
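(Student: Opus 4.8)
The plan is to bound each truncated counting function $\overline{N}_{(k+1}\big(r,\frac{1}{f-a_i}\big)$ by the ``multiplicity defect'' $N\big(r,\frac{1}{f-a_i}\big)-\overline{N}\big(r,\frac{1}{f-a_i}\big)$, and then to invoke Theorem~\ref{th1} to control the total defect. The case $k=+\infty$ is trivial, since a nonconstant meromorphic function has no zero of infinite order, so the left-hand side vanishes; thus I assume $k$ finite. First I would observe that a zero of $f-a_i$ of multiplicity $m$ contributes $m$ to $N\big(r,\frac{1}{f-a_i}\big)$ and $1$ to $\overline{N}\big(r,\frac{1}{f-a_i}\big)$, hence $m-1$ to their difference. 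A zero counted by $\overline{N}_{(k+1}$ has $m\geq k+1$, so $m-1\geq k$; summing over all zeros and discarding the nonnegative contributions of the remaining zeros yields, for each $i$, the inequality
$$ k\,\overline{N}_{(k+1}\big(r,\tfrac{1}{f-a_i}\big)\leq N\big(r,\tfrac{1}{f-a_i}\big)-\overline{N}\big(r,\tfrac{1}{f-a_i}\big). $$
Summing over $i=1,\dots,q$ reduces the problem to estimating $\sum_{i=1}^q\big(N-\overline{N}\big)\big(r,\tfrac{1}{f-a_i}\big)$ from above.

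For the upper bound on $\sum_i N\big(r,\frac{1}{f-a_i}\big)$, I would use the First Main Theorem together with the smallness of the $a_i$: since $T\big(r,\frac{1}{f-a_i}\big)=T(r,f-a_i)+O(1)=T(r,f)+S(r,f)$ and $N\leq T$, we get $N\big(r,\frac{1}{f-a_i}\big)\leq T(r,f)+S(r,f)$, whence $\sum_{i=1}^q N\big(r,\frac{1}{f-a_i}\big)\leq qT(r,f)+S(r,f)$. For the lower bound on $\sum_i\overline{N}\big(r,\frac{1}{f-a_i}\big)$, this is exactly where Theorem~\ref{th1} enters, giving $\sum_{i=1}^q\overline{N}\big(r,\frac{1}{f-a_i}\big)\geq\frac{2q}{5}T(r,f)-S(r,f)$. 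Subtracting, the two estimates combine to
$$ \sum_{i=1}^q\Big(N\big(r,\tfrac{1}{f-a_i}\big)-\overline{N}\big(r,\tfrac{1}{f-a_i}\big)\Big)\leq qT(r,f)-\tfrac{2q}{5}T(r,f)+S(r,f)=\tfrac{3q}{5}T(r,f)+S(r,f). $$
Dividing the summed per-index inequality by $k$ then gives $\sum_{i=1}^q\overline{N}_{(k+1}\big(r,\frac{1}{f-a_i}\big)\leq\frac{3q}{5k}T(r,f)+S(r,f)$, which is the assertion.

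The conceptual heart of the argument is the lower bound supplied by Theorem~\ref{th1}; without it one would obtain only the weaker coefficient $\frac{q}{k}$ coming from $\sum_i N\leq qT(r,f)$, and the improvement from $q$ to $\frac{3q}{5}$ is precisely what propagates the sharpness of Theorem~\ref{th1} into this lemma. I do not anticipate a genuine obstacle here: the weighting $m-1\geq k$ is elementary, both global inputs are already established, and the only care needed is the standard bookkeeping for zeros of $f-a_i$ that happen to coincide with zeros or poles of the small functions—such exceptional points contribute only to $S(r,f)$ and are absorbed into the error term.
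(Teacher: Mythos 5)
Your proposal is correct and follows essentially the same route as the paper: your pointwise ``multiplicity defect'' bound $k\,\overline{N}_{(k+1}\big(r,\frac{1}{f-a_i}\big)\leq N\big(r,\frac{1}{f-a_i}\big)-\overline{N}\big(r,\frac{1}{f-a_i}\big)$ is exactly the paper's inequality $k\overline{N}_{(k+1}+\overline{N}=(k+1)\overline{N}_{(k+1}+\overline{N}_{k)}\leq N,$ and both arguments then combine $N\big(r,\frac{1}{f-a_i}\big)\leq T(r,f)+S(r,f)$ with the lower bound from Theorem~\ref{th1} in the same way. Your explicit dismissal of the case $k=+\infty$ is a small point the paper leaves implicit, but it changes nothing of substance.
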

\begin{proof}
For each $1\leq i\leq q,$ we have 
\begin{align*}
k\overline{N}_{(k+1}\big (r,\frac{1}{f-a_i}\big )+\overline{N}\big (r,\frac{1}{f-a_i}\big )&=(k+1)\overline{N}_{(k+1}\big (r,\frac{1}{f-a_i}\big )+\overline{N}_{k)}\big (r,\frac{1}{f-a_i}\big )\\
&\leq N_{(k+1}\big (r,\frac{1}{f-a_i}\big )+N_{k)}\big (r,\frac{1}{f-a_i}\big )\\
&=N\big (r,\frac{1}{f-a_i}\big )\leq T(r,f)+S(r,f).
\end{align*}
Hence, we get 
\begin{equation*}
k\overline{N}_{(k+1}\big (r,\frac{1}{f-a_i}\big )\leq T(r,f)-\overline{N}\big (r,\frac{1}{f-a_i}\big )+S(r,f).
\end{equation*}
Combining this and Theorem~\ref{th1}, we obtain
$$k\sum_{i=1}^q\overline{N}_{(k+1}\big (r,\frac{1}{f-a_i}\big )\leq \frac{3q}{5}T(r,f)+S(r,f).$$ This completes  the proof of Lemma.
\end{proof}

\begin{proof}[Proof of Theorem~\ref{th3}.] 
Suppose that $f\not\equiv g.$ By arguments similar to the inequality \eqref{th2.3} in the proof of Theorem~\ref{th2}, we get
\begin{align}\label{th3.1}
\frac{2q(q-4)}{5}(T(r,f)+T(r,g))\leq &(q+4)\sum_{j=1}^q\Big(\overline{N}_{(k+1}\big(r,\frac{1}{f-a_j}\big)+\overline{N}_{(k+1}\big(r,\frac{1}{g-a_j}\big)\Big)\notag\\
&+S(r,f)+S(r,g).
\end{align}
By Lemma~\ref{lm3}, we have 
\begin{align}\label{th3.2}
\sum_{j=1}^q\Big(\overline{N}_{(k+1}\big(r,\frac{1}{f-a_j}\big)+\overline{N}_{(k+1}\big(r,\frac{1}{g-a_j}\big)\Big)\leq&\frac{3q}{5k}(T(r,f)+T(r,g))\notag\\
&+S(r,f)+S(r,g).
\end{align}
Combining \eqref{th3.1} and \eqref{th3.2}, we obtain
\begin{equation*}\frac{q}{5}\Big (2(q-4)-\frac{3(q+4)}{k}\Big )(T(r,f)+T(r,g))\leq S(r,f)+S(r,g).
\end{equation*} Thus, when $k>\frac{3(q+4)}{2(q-4)},$ we have a contradiction. Hence, $f\equiv g.$  
\end{proof}

 {\it Acknowledgments.}
A part of this article was written while the first author was visiting  Vietnam Institute for Advanced Study in Mathematics (VIASM).  She would like to thank the institute for warm hospitality and partial support.

\end{document}